\let\oldmarginpar\marginpar
\renewcommand\marginpar[1]{\-\oldmarginpar[\raggedleft\footnotesize #1]%
{\raggedright\footnotesize #1}}
\theoremstyle{plain}
\newtheorem{thm}{Theorem}[section]
\newtheorem{lemma}[thm]{Lemma}
\newtheorem{prop}[thm]{Proposition}
\newtheorem{cor}[thm]{Corollary}
\theoremstyle{definition}
\newtheorem{definition}[thm]{Definition}
\newtheorem{remark}[thm]{Remark}
\theoremstyle{remark}
\numberwithin{equation}{section}
\newcommand{\D}{\mathbb{D}}
\newcommand{\N}{\mathbb{N}}
\newcommand{\Z}{\mathbb{Z}}
\newcommand{\R}{\mathbb{R}}
\newcommand{\C}{\mathbb{C}}
\newcommand{\sS}{\mathcal{S}}
\renewcommand{\a}{\alpha}
\newcommand{\la}{\lambda}
\newcommand{\La}{\Lambda}
\newcommand{\p}{\varphi}
\newcommand{\e}{\varepsilon}
\newcommand{\dd}{\partial}
\newcommand{\sse}{\subseteq}
\newcommand{\x}{\times}
\newcommand{\supp}{\operatorname{supp}}
\newcommand{\Op}{\operatorname{Op}}
\newcommand{\Diff}{\operatorname{Diff}}
\newcommand{\Symp}{\operatorname{Symp}}
\newcommand{\Cont}{\operatorname{Cont}}
\newcommand{\TOP}{\operatorname{TOP}}
\newcommand{\wt}{\widetilde}
\newcommand{\id}{\text{id}}
\newcommand{\st}{\text{st}}
\newcommand{\std}{\text{st}}
\newcommand{\ot}{\text{ot}}
\newcommand{\Int}{\operatorname{Int}}
\def\Op{{\mathcal O}{\it p}\,}
\newcommand{\bR}{\mathbb{R}}
\newcommand{\bZ}{\mathbb{Z}}
\newcommand{\scrF}{\EuScript{F}}
\begin{document}
\begin{abstract}
We construct a symplectic structure on a disc that admits a compactly supported symplectomorphism which is not smoothly isotopic to the identity. The symplectic structure has an overtwisted concave end; the construction of the symplectomorphism is based on a unitary version of the Milnor--Munkres pairing. En route, we introduce a symplectic analogue of the Gromoll filtration.
\end{abstract}

\title{Symplectomorphisms of exotic discs}
\subjclass[2010]{Primary: 57R17. Secondary: 53D10,53D15.}

\author{Roger Casals}{\thanks{R.C.~is supported by NSF grant DMS-1608018 and a BBVA Research Fellowship}
\address{Massachusetts Institute of Technology, Department of Mathematics, 77 Massachusetts Avenue Cambridge, MA 02139, United States of America}
\email{casals@mit.edu}

\author{Ailsa Keating}\thanks{A.K.~ is partially supported by NSF grant DMS--1505798, by a Junior Fellow award from the Simons Foundation, and by NSF grant DMS-1128155 whilst at the Institute for Advanced Study}
\address{Institute for Advanced Study, Princeton, NJ, 08540, U.S.A.}
\email{keating@ias.edu}

\author{Ivan Smith}\thanks{I.S.~is partially supported by a Fellowship from the EPSRC.  \\ Any opinions, findings and conclusions or recommendations expressed in this material are those of the author(s) and do not necessarily reflect the views of the National Science Foundation.}
\address{Centre for Mathematical Sciences, University of Cambridge, Wilberforce Road, CB3 0WB, United Kingdom}
\email{is200@cam.ac.uk}

\maketitle

\section{Introduction}\label{sec:intro}
In this note, we construct compactly supported symplectomorphisms of certain Euclidean spaces, equipped with non-standard symplectic structures, which are not smoothly isotopic to the identity. 

\begin{thm}\label{thm:main}
Let $\phi\in\pi_0\Diff(D^{4k},\partial)$ be the mapping class of the Kervaire sphere $\Sigma^{4k+1}$. There is a $($non-standard$)$ symplectic structure $\omega_{\text{ot}} \in\Omega^2(D^{4k})$ and a compactly supported symplectomorphism $\p\in\Symp(D^{4k},\partial;\omega_{\text{ot}})$ such that $[\p]=\phi$ in $\pi_0\Diff(D^{4k},\partial)$. 

Therefore,  the inclusion $\Symp(D^{4k},\partial;\omega_{\text{ot}})\sse\Diff(D^{4k},\partial)$ induces a non--zero map
\begin{equation} \label{eqn:comparison}
\pi_0\Symp(D^{4k},\partial;\omega_{\text{ot}})\longrightarrow\pi_0\Diff(D^{4k},\partial)
\end{equation}
whenever $k \notin \{ 1, 3, 7, 15, 31\}$.
\end{thm}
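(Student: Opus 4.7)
The strategy is to separate the construction of the symplectomorphism from the identification of its smooth mapping class; the \emph{overtwisted} concave end of $\omega_\ot$ is the mechanism that converts a unitary clutching construction into a genuine compactly supported symplectomorphism.

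For the construction, I would start from the classical Milnor--Munkres pairing $\mu: \pi_p SO_q \otimes \pi_q SO_p \to \pi_0\Diff(D^{p+q},\partial)$, which takes clutching data $f: S^p \to SO(q)$ and $g: S^q \to SO(p)$ and assembles them into a commutator-type diffeomorphism of $D^p \x D^q$ fixing the boundary. The plan is to develop a unitary analogue $\mu_U$ on $D^{2p}\x D^{2q}\sse\C^p\x\C^q$, using that $U(n)$ acts on $\C^n$ by linear symplectomorphisms: each of the constituent twists built from $U$--valued data is then manifestly symplectic. The overtwisted concave end enters when patching the pieces together to a globally defined compactly supported symplectomorphism of $(D^{4k},\omega_\ot)$, via a parametric $h$--principle realizing prescribed smooth isotopy classes through genuine symplectomorphisms. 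This parametric flexibility is presumably what the symplectic Gromoll filtration mentioned in the abstract formalizes.

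To identify the smooth mapping class of $\mu_U(\a,\beta)$ with the Kervaire class $\phi$, I would use the realification map $U(n)\to SO(2n)$, which carries $\mu_U$ to $\mu$, and appeal to the classical fact that the Kervaire sphere in $\Theta_{4k+1}\cong\pi_0\Diff(D^{4k},\partial)$ is detected by a Milnor--Munkres product in the appropriate bidegree. One then chooses $\a\in\pi_p U_q$ and $\beta\in\pi_q U_p$ with $p+q=2k$ whose realifications generate this class, which is a homotopy--theoretic exercise using Bott periodicity and the structure of the image of $J$. The dimensional restriction $k\notin\{1,3,7,15,31\}$ enters precisely because, by Hill--Hopkins--Ravenel (together with the still open dimension $126$), these are the $k$ for which $\Sigma^{4k+1}$ is diffeomorphic to the standard sphere; outside this set $\phi\neq 0$ and hence the map \eqref{eqn:comparison} is non--zero.

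The main obstacle is genuinely realizing the unitary commutator construction as a compactly supported symplectomorphism. Merely having an overtwisted end is not automatic: one must show that the smooth isotopy class built from the unitary data actually lifts to $\pi_0\Symp(D^{4k},\partial;\omega_\ot)$ while remaining compactly supported, and that the resulting symplectic mapping class maps surjectively onto the Kervaire sphere class under the forgetful map to $\pi_0\Diff$. Controlling this lift and organizing it through the symplectic Gromoll filtration --- so that the symplectic isotopy class is distinguished from the identity precisely by the non--triviality of its smooth mapping class --- is where the substance of the argument should lie.
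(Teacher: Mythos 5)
There is a genuine gap at the heart of your construction: the claim that the unitary commutator pieces are ``manifestly symplectic''. For a non-constant map $A:\R^{2n}\to U(n)$, the shear $\Phi_A(x,y)=(x,A(x)y)$ is \emph{not} a symplectomorphism of $(\C^{2n},\omega_{\std})$: its differential has off-diagonal terms involving $\partial A/\partial x$, so it is only fibrewise unitary after a compactly supported homotopy of bundle maps, i.e.\ it is an almost-complex diffeomorphism in the sense of Definition \ref{defn:almost_cx_diffeo}, nothing more. If the commutator $[\Phi_A,\Psi_B]$ were genuinely symplectic for $\omega_{\std}$, Theorem \ref{thm:main} would hold for the standard form, which the paper states is open (see also Section \ref{ssec:stdsymp}, where it is noted that for natural representatives the linear interpolation between $\omega_{\std}$ and its pullback already fails to be symplectic). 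Relatedly, your appeal to ``a parametric $h$-principle realizing prescribed smooth isotopy classes through genuine symplectomorphisms'' does not exist: compactly supported symplectomorphisms of a fixed domain are not flexible in that sense, and no amount of overtwistedness of the end directly produces such a lift in $\pi_0\Symp$.

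The actual mechanism works one dimension down and then suspends. Proposition \ref{prop:milnormunkres2} arranges a Kervaire representative $\mu$ that preserves radial spheres, so its restrictions $\mu_t$ form a loop of \emph{almost contactomorphisms} of $S^{4k-1}$ supported in a disc; the overtwistedness is then used through the Borman--Eliashberg--Murphy $h$-principle for overtwisted \emph{contact structures} (applied to the two-parameter family $\xi''_{t,s}$ interpolating between $\xi_\ot$ and $(\mu_t)_*\xi_\ot$), followed by Gray stability, to convert $\{\mu_t\}$ into a genuine loop of compactly supported contactomorphisms $G_t$ of $(D^{4k-1},\xi_\ot)$ smoothly isotopic to $\{\mu_t\}$. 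Finally the contact-to-symplectic Gromoll map (Proposition \ref{Lem:Moser}, a Moser argument on the symplectization) suspends this loop to a compactly supported symplectomorphism of $(D^{4k},d(e^t\alpha_\ot))$ whose smooth class is the Gromoll image of $[\{\mu_t\}]$, i.e.\ the Kervaire class. So $\omega_\ot$ is specifically the symplectization of an overtwisted contact ball, and the ``symplectic Gromoll filtration'' is not bookkeeping but the suspension step itself. Your homotopy-theoretic outline (lifting the clutching data to $\pi_{2n}U(n)$, the plumbing description of the Kervaire sphere, and the role of Hill--Hopkins--Ravenel in the excluded dimensions) matches the paper, but without the contact-level detour your argument does not produce a symplectomorphism.
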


The symplectic 2--form $\omega_{\text{ot}}$ has an overtwisted concave end \cite{ADK05,EM15,Taubes98}, in particular $(D^{4k}, \omega_{\text{ot}})$ is not a Weinstein domain, as we will prove in Proposition \ref{prop:non-Wein}.  The question of whether the analogous map to  \eqref{eqn:comparison} is non-trivial for the standard symplectic structure on the disc is still an open problem, about which we can unfortunately say nothing.

The same techniques used to prove Theorem \ref{thm:main} yield:

\begin{thm}\label{thm:second} Let $(D^{4k-1},\ker\a_\ot)$ be an overtwisted contact structure and $(D^{4k},\omega_\ot)$ its symplectization. Suppose $k \notin \{ 1, 3, 7, 15, 31\}$. 
\begin{itemize}
\item[1.]  We have  $\pi_1 \Cont (D^{4k-1}, \partial; \ker \alpha_{\text{ot}}) \neq \{1\}$.%
\item[2.] 
If $k$ is odd, then $\begin{cases} 
 \pi_j\Symp(D^{4k-j}, \partial; \omega_\ot) \neq \{1\} \ \textrm{for} \  j\in\{2,4\}, \\ \pi_j \Cont (D^{4k-j}, \partial; \ker \alpha_\ot) \neq \{1\} \ \textrm{for} \ j\in \{3,5\}.\end{cases}$
\end{itemize}
\end{thm}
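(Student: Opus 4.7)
The strategy is to exploit the symplectic Gromoll filtration announced in the abstract to propagate the non-triviality of Theorem~\ref{thm:main} down to higher homotopy groups of lower-dimensional contactomorphism and symplectomorphism groups of overtwisted discs. The essential observation is that the symplectomorphism $\varphi \in \Symp(D^{4k}, \partial; \omega_\ot)$ of Theorem~\ref{thm:main} should not be built by hand; rather, it arises from a compactly supported loop $\{\gamma_t\}_{t \in S^1}$ of contactomorphisms of $(D^{4k-1}, \ker\alpha_\ot)$ via a symplectic clutching construction, in which the loop parameter sweeps out the Liouville direction in the symplectization $D^{4k}$. This defines a symplectic Gromoll homomorphism
\[
\Phi_1 \colon \pi_1 \Cont(D^{4k-1}, \partial; \ker\alpha_\ot) \longrightarrow \pi_0 \Symp(D^{4k}, \partial; \omega_\ot), \qquad \Phi_1([\{\gamma_t\}]) = [\varphi],
\]
compatible with the smooth Gromoll homomorphism through the forgetful map. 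Since Theorem~\ref{thm:main} guarantees that $[\varphi]$ projects to the Kervaire class in $\pi_0 \Diff(D^{4k}, \partial)$, which is non-trivial for $k \notin \{1,3,7,15,31\}$, the class $[\{\gamma_t\}]$ is non-trivial in $\pi_1 \Cont$, proving part~1.

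Part~2 proceeds by iterating the Gromoll step. The loop $\{\gamma_t\}$ itself ought to be obtained, via an analogous clutching, from a two-parameter family of compactly supported symplectomorphisms of $(D^{4k-2}, \omega_\ot)$; the latter from a three-parameter family of contactomorphisms of $(D^{4k-3}, \ker\alpha_\ot)$; and so on. This yields a cascade of symplectic Gromoll homomorphisms
\[
\pi_j \Symp(D^{4k-j}, \partial; \omega_\ot) \to \pi_{j-1} \Cont(D^{4k-j+1}, \partial; \ker\alpha_\ot) \to \cdots \to \pi_0 \Symp(D^{4k}, \partial; \omega_\ot),
\]
alternating between the contact and symplectic categories at each step. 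Non-triviality of any term in the cascade follows once we exhibit a class whose image in $\pi_0 \Diff(D^{4k}, \partial)$ is the Kervaire class.

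The restriction to odd $k$ and $j \leq 5$ reflects how far the unitary Milnor--Munkres construction can be desuspended. The pairing takes its input from $\pi_\ast U$, and by Bott periodicity the generator in $\pi_{2k-1} U$ underlying the Kervaire-sphere construction admits further unitary loopings only when $k$ is odd; tracking these loopings through the Gromoll tower produces families in the parameter ranges $j \in \{2,3,4,5\}$ claimed in the theorem. I would verify this by composing the Bott loopings with the Gromoll clutching maps and checking that the resulting classes still hit the Kervaire mapping class in $\pi_0 \Diff(D^{4k},\partial)$, invoking the smooth Gromoll filtration to detect them.

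The main obstacle I anticipate is confirming that the parameterized families produced by the unitary classes can be realized by genuine compactly supported contactomorphisms or symplectomorphisms of overtwisted discs, and not only as diffeomorphisms. Overtwisted flexibility in the sense of Borman--Eliashberg--Murphy, together with its symplectic counterpart exploited via the overtwisted concave end in Theorem~\ref{thm:main}, should furnish the needed existence statements; but checking that the Gromoll clutching of a contact (resp.\ symplectic) family remains contact (resp.\ symplectic) throughout, especially near the concave end where overtwistedness is localised, is the technical crux of the argument.
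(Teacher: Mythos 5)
Your Part 1 is essentially the paper's argument: the loop of contactomorphisms of $(D^{4k-1},\ker\a_\ot)$ is exactly what the paper constructs (almost-complex Milnor--Munkres representative, restricted to radial spheres, rectified by the parametric overtwisted $h$-principle and Gray stability), and it is detected by its image under the smooth Gromoll map, which is the Kervaire class. So that half is fine.

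Part 2, however, has a genuine gap: you never construct the higher-parameter families, and the mechanism you invoke would not produce them. The Gromoll-type suspension maps only go downward in homotopy degree ($\pi_j$ of a disc to $\pi_{j-1}$ of a one-dimension-larger disc), so knowing that the loop $\{\gamma_t\}$ is non-trivial does not imply it "ought to be obtained" from a $2$-parameter symplectic family, a $3$-parameter contact family, etc.; the existence of the deepest family is precisely the content that must be proved, and everything else then follows by suspending. In the paper this input is Lemma \ref{Lem:tangent_lifts} together with Proposition \ref{prop:higher_dim_almost_contact}: the class $\sigma\in\pi_{2n}U(n)$ lifts to $\pi_{2n}U(n-1)$ if and only if $n$ is odd, and choosing a Milnor--Munkres representative $A$ valued in $U(n-1)$ makes the commutator $[\Phi_A,\Psi_A]$ literally independent of two complex coordinates $(x,y)$ while preserving radii in the remaining $\C^{2n-2}$; restricting to the spheres $\{x\}\times S^{4n-3}\times\{t\}$ and $\{(x,y)\}\times S^{4n-5}\times\{t\}$ is what yields the $3$- and $5$-parameter families of almost-contactomorphisms, which are then rectified by the parametric Borman--Eliashberg--Murphy theorem and Gray stability to give classes in $\pi_3\Cont(D^{4k-3},\partial;\ker\a_\ot)$ and $\pi_5\Cont(D^{4k-5},\partial;\ker\a_\ot)$, and the $\pi_2$ and $\pi_4$ symplectic classes by one application of the contact-to-symplectic Gromoll map. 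Your justification of the odd-$k$ restriction via "Bott periodicity of $\pi_{2k-1}U$" is not correct: the relevant group is the unstable group $\pi_{2n}U(n)\cong\Z/n!$ (the stable groups $\pi_{2n}U$ vanish), and the odd/even dichotomy comes from Kervaire's exact sequences for the fibration $U(n-1)\to U(n)\to S^{2n-1}$, namely that the tangent-bundle class is an odd multiple of the generator of $\pi_{4m}U(2m)$ and hence does not lift when $n$ is even. Without this destabilization step (and the resulting coordinate-invariant representative), your cascade has no starting point, and the restriction to $j\le 5$ and to odd $k$ is unexplained rather than a consequence of your argument.
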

In each case, we find a non-zero element whose image under the composition of the forgetful map to $\pi_i \Diff(D^{4k-i}, \partial)$ with the Gromoll-filtration map to $\pi_0\Diff(D^{4k},\partial)$ 
is the clutching map for the Kervaire sphere. 

The non-trivial classes in both theorems have order at least $2$ and at most $(2k)!$, see Remark \ref{rem:order}. These symplectomorphisms can be implanted into a closed symplectic manifold $(M, \omega)$ by changing $\omega$ near a point $p\in M$ to yield a symplectic structure on $M\backslash \{p\}$ with a concave end, cf. Lemma \ref{lem:inertia}.

The article is organized as follows. In order to establish Theorem \ref{thm:main}, we use the Milnor--Munkres construction of exotic mapping classes in the almost complex setting; this is the content of Section \ref{sec:mmnpairing}. Section \ref{sec:Gromoll} develops the symplectic analogue of the smooth Gromoll filtration, intertwining contact and symplectic structures. Section \ref{sec:proof} contains the proofs of Theorems \ref{thm:main} and \ref{thm:second}. Finally,  Section \ref{sec:app} elaborates on the properties of the symplectic structures featuring in the statements of the above results and provides a few brief remarks on properties of symplectomorphisms (should any exist) for the standard symplectic structure.

\subsection*{Acknowledgements} We are grateful to Diarmuid Crowley, Dusa McDuff and Oscar Randal--Williams for valuable conversations. 


\section{Milnor--Munkres Pairings}\label{sec:mmnpairing}

The group of compactly supported diffeomorphisms of  Euclidean space $\R^{2m}$ is denoted by $\Diff^c(\R^{2m})$. It is equipped with the compact-open topology; its set of connected components $\pi_0\Diff^c(\R^{2m})$ inherits a group structure, which coincides with the group of exotic $(2m+1)$--dimensional spheres under connected sum. Given a mapping class $\eta \in \Diff^c(\R^{2m})$, we denote by $\Sigma_{\eta} \in \Theta_{2m+1}$ the corresponding exotic sphere. 


\subsection{Smooth Milnor-Munkres pairing}
The Milnor--Munkres pairing is, in its simplest form \cite[p.~583]{Lashof}, a group homomorphism
\begin{equation}\label{eqn:milnor_munkres}
\tau: \pi_m SO(m) \times \pi_m SO(m) \longrightarrow \pi_0\Diff^c(\bR^{2m}).
\end{equation}
The map is obtained by a commutator construction. Given a pair of homotopy classes $a,b \in \pi_m SO(m)$, choose two continuous maps
\begin{equation}\label{eqn:matrix_maps}
A,B: (\R^m,\R^m\setminus D^m(1)) \rightarrow (SO(m),\id)
\end{equation}
respectively representing these homotopy classes, and consider the two diffeomorphisms
$$\Phi_A,\Psi_B:\R^m\times\R^m\longrightarrow\R^m\times\R^m,$$
$$\quad\Phi_A: (x,y) \mapsto (x,A(x)(y)),\qquad\Psi_B: (x,y) \mapsto (B(y)(x),y),$$
of the Euclidean space $\bR^{2m}$ endowed with co--ordinates $(x,y)\in\R^m\times\R^m$. These diffeomorphisms are not compactly supported, but their commutator
$$[\Phi_A,\Psi_B]=\Psi_B^{-1}\Phi_A^{-1}\Psi_B\Phi_A$$
is a compactly supported diffeomorphism, and its mapping class depends only on the homotopy classes $a$ and $b$; the pairing \eqref{eqn:milnor_munkres} is then defined by setting $\tau(a,b) = [\Phi_A,\Psi_B]$.

The resulting mapping class $\tau(a,b)\in\pi_0\Diff^c(\bR^{2m})$ defines a smooth structure on the topological sphere $S^{2m+1}$. This smooth structure, not necessarily diffeomorphic to the standard sphere $S^{2m+1}$, also admits a description as the boundary of a smooth plumbing, as follows.

Each homotopy class $a \in \pi_mSO(m)$ defines, by the standard inclusion $SO(m) \rightarrow SO(m+1)$, a homotopy class $\wt a\in\pi_{m} SO(m+1)\cong\pi_{m+1}(BSO(m+1))$ and hence a rank $(m+1)$ vector bundle $\bar{E}_{a}\longrightarrow S^{m+1}$. Explicitly, this vector bundle is obtained by using the element in $\pi_m SO(m+1)$ as the clutching map for the vector bundle trivialised over the two hemispheres of $S^{m+1}$.  Therefore, a pair of classes $a, b $ define a pair of such vector bundles, whose disc bundles we denote by $E_a$ and $E_b$. 
\begin{lemma}\label{Lem:plumbing}
The smooth boundary of the plumbing $E_a \natural E_b$ is diffeomorphic to the exotic smooth $(2m+1)$--sphere defined by $\tau(a, b)$.
\end{lemma}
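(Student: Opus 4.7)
The plan is to produce an explicit handle-style decomposition of the plumbing, compute its boundary in terms of the two clutching maps $A$ and $B$, and match the resulting model with the clutching description of $\Sigma_{\tau(a,b)}$.

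First, present each disc bundle via the clutching construction:
\[
E_a \;=\; (D^{m+1}\times D^{m+1})_+ \cup_A (D^{m+1}\times D^{m+1})_-,
\]
where $A:S^m\to SO(m+1)$ represents $a$ and the two $(2m+2)$-discs are glued along the common face $S^m\times D^{m+1}$ by $(x,y)\mapsto(x,A(x)y)$; likewise for $E_b$ using $B$. The plumbing operation identifies the $(D^{m+1}\times D^{m+1})_+$-chart of $E_a$ with that of $E_b$ by the swap $(x,y)\leftrightarrow(y,x)$. Writing $P$ for the shared plumbing chart and $U_a,U_b$ for the two remaining halves,
\[
E_a\natural E_b \;=\; U_a \cup P \cup U_b
\]
is a union of three $(2m+2)$-discs.

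Next I would compute the boundary. The full face $\partial P\cong S^{2m+1}$ is absorbed by the two gluings: its $S^m\times D^{m+1}$ half is identified with a face of $\partial U_a$ via $A$, and (after the swap) its $D^{m+1}\times S^m$ half is identified with a face of $\partial U_b$ via $B$. What remains in $\partial(E_a\natural E_b)$ are the two fibrewise boundary faces $D^{m+1}\times S^m$ of $U_a$ and $U_b$, which meet along a common corner $S^m\times S^m$. Tracking a point $(u,v)\in\partial P$ through both identifications produces an explicit gluing
\[
f\colon S^m\times S^m \to S^m\times S^m, \qquad f(x,y) \;=\; \bigl(A(x)y,\ B(A(x)y)\,x\bigr)
\]
(up to orientation conventions), so that
\[
\partial(E_a\natural E_b) \;\cong\; (D^{m+1}\times S^m) \cup_f (D^{m+1}\times S^m).
\]

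The last step is to match this with the clutching description of $\Sigma_{\tau(a,b)}$. The standard sphere $S^{2m+1}=\partial(D^{m+1}\times D^{m+1})$ admits the analogous decomposition with gluing the swap $s(x,y)=(y,x)$, so the smooth structure on the topological sphere $\partial(E_a\natural E_b)$ is encoded by the diffeomorphism $\sigma = f\circ s^{-1}$ of $S^m\times S^m$. After smoothing corners and extending $\sigma$ by the identity to a compactly supported self-diffeomorphism of $\R^{2m}=\R^m\times\R^m$, the four appearances of $A^{\pm1},B^{\pm1}$ in $\sigma$ regroup as the four factors $\Phi_A^{\pm1}, \Psi_B^{\pm1}$ in the commutator $[\Phi_A,\Psi_B]$. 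This produces a smooth isotopy between $\sigma$ and $\tau(a,b)$, hence $\partial(E_a\natural E_b)\cong\Sigma_{\tau(a,b)}$.

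The main obstacle is this final identification: showing that the gluing $f$, obtained from the two clutchings on their common corner, is smoothly isotopic to the Milnor--Munkres commutator. This is delicate because $A$ and $B$ interact non-commutatively on $S^m\times S^m$; the desired isotopy requires extending $A$ and $B$ coherently across the hemispheres of $S^m$, which is precisely the content of the Milnor--Munkres construction translated into the plumbing picture. All the earlier bookkeeping of chart identifications is routine once the decomposition $U_a\cup P\cup U_b$ has been set up.
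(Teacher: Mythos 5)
The paper offers no argument for this lemma at all --- it simply cites Lawson --- so there is nothing to compare line by line; your proposal is an attempt at the standard direct proof. The bookkeeping part of it is fine: writing $E_a \natural E_b = U_a \cup P \cup U_b$, observing that $\partial P$ is absorbed by the two gluings, and computing that the boundary is a union of two copies of $D^{m+1}\times S^m$ glued along the corner $S^m \times S^m$ by a map of the shape $f(x,y) = \bigl(\tilde A(x)y,\ \tilde B(\tilde A(x)y)^{\pm 1}x\bigr)$ is correct up to orientation and inverse conventions.

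The genuine gap is exactly the step you defer to the end, and two of the assertions made there fail as stated. First, $\sigma = f\circ s^{-1}$, i.e. $(x,y)\mapsto \bigl(\tilde A(y)x,\ \tilde B(\tilde A(y)x)^{\pm1}y\bigr)$, is a diffeomorphism of $S^m\times S^m$ that is \emph{not} supported in a chart: $\tilde A(y)$ moves every $x \in S^m$ whenever $y$ lies outside the basepoint region where $A=\id$, so ``extending $\sigma$ by the identity to a compactly supported diffeomorphism of $\R^{2m}$'' is not available at this stage. Second, a decomposition into two copies of $D^{m+1}\times S^m$ is not a clutching along $S^{2m}$, so the smooth structure is not simply ``encoded by $\sigma$''; what is true is that the twisted double depends on $f$ only up to pre- and post-composition with diffeomorphisms of $S^m\times S^m$ that extend over the respective pieces $D^{m+1}\times S^m$ (for instance $(x,y)\mapsto (R(y)x,y)$ with $R\colon S^m \to SO(m+1)$, which extends linearly in the disc factor). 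The content of the lemma is precisely to use these moves, together with the hypothesis that $A,B$ take values in $SO(m)\subset SO(m+1)$ --- so that each $\tilde A(y)$, $\tilde B(x)$ fixes the poles of $S^m$ and the gluing can be pushed off a $D^{m+1}\times S^m$-slice --- to normalize $f$ to $s\circ c$ with $c$ supported in a disc $D^{2m}\subset S^m\times S^m$ and identified with the commutator $[\Phi_A,\Psi_B]$; only then does one obtain a genuine two-disc decomposition exhibiting $\Sigma_{\tau(a,b)}$. Note that your sketch never uses the lift of $a,b$ to $\pi_m SO(m)$ (the bundles only see the stabilized classes in $\pi_m SO(m+1)$), which is a symptom that the decisive part of the argument is missing. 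As written, the proposal reduces the lemma to itself; to complete it you must carry out the normalization just described, or do as the paper does and invoke the reference where this identification is established.
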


\begin{proof}
See  for instance \cite[p.~834]{Lawson}
\end{proof}

Consider the smooth $(2m+1)$--dimensional manifold
$$\Sigma=\{z_1^3+z_2^2+\ldots+z_{m+2}^2=1\}\cap\{|z_1|^2+\ldots+|z_{m+2}|^2=1\}\subseteq\C^{m+2},$$
i.e.~ the link of the $A_2$--singularity. The manifold $\Sigma$ is a homotopy sphere,  known as the Kervaire sphere. It relates to the previous discussion via the following;
\begin{cor}\label{Cor:Kervaire_sphere}
Consider two homotopy classes $a,b\in\pi_m SO(m)$ such that
$$\wt a=\wt b=[TS^{m+1}]\in\pi_m SO(m+1).$$
Then $\tau(a,b)$ is diffeomorphic to the Kervaire sphere.
\end{cor}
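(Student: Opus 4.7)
The plan is to combine Lemma \ref{Lem:plumbing} with the classical identification of the Milnor fibre of an $A_2$-singularity as a plumbing of two copies of the disc tangent bundle of a sphere.

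First I would observe that the hypothesis $\wt a = [TS^{m+1}]\in\pi_m SO(m+1)$ determines the vector bundle $\bar E_a\to S^{m+1}$ up to isomorphism as the tangent bundle $TS^{m+1}$. This is immediate from the fact that a rank $(m+1)$ oriented vector bundle over $S^{m+1}$ is determined by its clutching map in $\pi_m SO(m+1)$, together with the fact that $TS^{m+1}$ is precisely the bundle with clutching map $[TS^{m+1}]$. Consequently the associated disc bundle $E_a$ is diffeomorphic to the unit disc tangent bundle $DTS^{m+1}$, and the analogous statement holds for $E_b$.

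Next, by Lemma \ref{Lem:plumbing}, the exotic sphere $\Sigma_{\tau(a,b)}$ is diffeomorphic to the smooth boundary of the plumbing $DTS^{m+1}\natural DTS^{m+1}$ along the $A_2$-Dynkin diagram. The core of the proof is then to identify this plumbing with the Milnor fibre of the isolated hypersurface singularity
\[
f(z_1,\ldots,z_{m+2}) = z_1^3 + z_2^2 + \cdots + z_{m+2}^2.
\]
This identification is standard: the Milnor fibre $\{f = \e\}\cap D^{2m+4}$ of an $A_k$-singularity in $\C^{n}$ is diffeomorphic to the $A_k$-plumbing of $k$ copies of $DTS^{n-1}$, with the vanishing cycles providing the zero sections. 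Specialising to $k=2$ and $n=m+2$, one obtains exactly $DTS^{m+1}\natural DTS^{m+1}$.

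Finally, I would conclude by noting that the link $\Sigma = \{f=0\}\cap S^{2m+3}$ is diffeomorphic to the boundary of the Milnor fibre, hence to $\dd(DTS^{m+1}\natural DTS^{m+1})$, which by the previous two steps is diffeomorphic to $\Sigma_{\tau(a,b)}$. The main step is the plumbing-equals-Milnor-fibre identification for $A_2$, which is classical and can simply be cited from the singularity theory literature; once granted, the rest is a chain of bundle isomorphisms and boundary identifications.
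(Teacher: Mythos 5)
Your proposal is correct and follows essentially the same route the paper intends: the paper gives no separate argument for the corollary, deriving it directly from Lemma \ref{Lem:plumbing} together with the standard identification of the Kervaire sphere (the link of the $A_2$-singularity, as defined just before the corollary) with the boundary of the $A_2$-plumbing of two copies of the disc tangent bundle $DTS^{m+1}$. Your intermediate steps --- that $\wt a=\wt b=[TS^{m+1}]$ forces $E_a\cong E_b\cong DTS^{m+1}$ via the clutching construction, and that the $A_2$-Milnor fibre is the plumbing of two tangent disc bundles with boundary the link --- are exactly the classical facts being invoked.
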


The class of the tangent bundle $[TS^{m+1}]\in\pi_m  SO(m+1)$ lifts to an element of $\pi_m SO(m)$ when $m$ is even, since odd-dimensional spheres admit nowhere vanishing vector fields; hence the comparison with the classes $a,b\in\pi_m SO(m)$ can be made in a rank $m$ bundle.

Corollary \ref{Cor:Kervaire_sphere} provides the description for the Kervaire sphere we shall use in the proof of Theorem \ref{thm:main}. First, we further examine the case where $m=2n$ is even and the classes $a$, $b$ are in the image of $\pi_{2n} U(n)$, in which situation the Milnor-Munkres maps have nice descriptions as almost-complex maps.


\subsection{Unitary Milnor-Munkres pairings}

Let us start by specifying the definition of an almost complex diffeomorphism.

\begin{definition}\label{defn:almost_cx_diffeo}
A compactly supported almost-complex diffeomorphism of Euclidean space $\bR^{2m}$ is a pair $(f, h)$ consisting of a compactly supported diffeomorphism $f \in \Diff^{c} (\R^{2m})$ and a path $h  = \{h_i \}_{i\in [0,1]}$ of bundle automorphisms $h_i: T\R^{2m} \longrightarrow T\R^{2m}$ such that:
\begin{itemize}
\item[(a)] $h_0 = Df$ and $h_1$ is a $U(m)$--bundle map, i.e.~ the fiber maps
$$(h_i)_p:T_p\R^{2m}\longrightarrow T_{h_i(p)}\R^{2m},\quad\forall p\in\R^{2m},$$
lie in the subgroup $U(m) \leq GL_m(\C)\leq GL_{2m}(\R)$.\\

\item[(b)] Each $h_i$ has compact support: $h_i=\id$ ouside $TK$, for some compact $K\subseteq\R^{2m}$.
\end{itemize}
The collection of such pairs $(f,h)$ is denoted by $\Diff^c (\R^{2m}; J)$. A compactly supported almost--contact diffeomorphism is defined as a stable almost--complex diffeomorphism: a pair $(g,k)$ with $g \in \Diff^c (\R^{2m+1})$ and $\{ k_i \}_{i \in [0,1]}$ a homotopy of bundle maps from the differential $k_0=Dg$ to a $(U(m) \oplus 1)$--bundle map $k_1$ with the obvious compactness conditions.\hfill$\Box$
\end{definition}

The set $\Diff^c(\R^{k};J)$ is topologised as a subspace of $\Diff^c(\R^{k}) \times \textrm{Maps}([0,1],\textrm{End}(T\R^{k}))$.

Implicit in Definition \ref{defn:almost_cx_diffeo} is the choice of the standard (constant) almost complex structure $i$ on $\R^{2m} = \C^m$, via the subgroup of $i$-linear maps $GL_m(\C)$ and its maximal compact subgroup. There is an obvious analogue for a general (not necessarily constant) almost complex structure $J$ on $\R^{2m}$, in which the homotopy $\{h_i\}$ interpolates between $Df$ and a $J$-linear map (or rather, a $J$-linear isometry) through compactly supported bundle automorphisms.  Since the space of almost complex structures on $\R^{2m}$ compatible with the standard orientation is connected, the homotopy type of the resulting space $\Diff^c(\R^{2m};J)$ is independent of $J$, whence the notation. 

Let $\TOP(2m)$ denote the group of homeomorphisms of $\R^{2m}$. A result of  \cite{BL} yields a homotopy equivalence   $\Diff^c (\R^{2m}) \simeq \Omega^{2m+1} (\TOP(2m) / SO(2m))$, and analogously $\Diff^c (\R^{2m}; J) \simeq \Omega^{2m+1} (\TOP(2m) / U(m))$. In particular, the space of almost complex diffeomorphisms is an $h$-space, even if not strictly a group.

\begin{lemma}\label{lem:hit_all}
The forgetful map $\pi_0(j):\pi_0 \Diff^c(\R^{2m}; J) \longrightarrow \pi_0 \Diff^c (\R^{2m})$ is onto for $m\geq3$.
\end{lemma}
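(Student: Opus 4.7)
My approach is to apply the Burghelea--Lashof equivalences recalled just before the lemma,
\[
\Diff^c(\R^{2m})\simeq\Omega^{2m+1}(\TOP(2m)/SO(2m)),\qquad \Diff^c(\R^{2m};J)\simeq\Omega^{2m+1}(\TOP(2m)/U(m)),
\]
together with the fibration $SO(2m)/U(m)\to\TOP(2m)/U(m)\to\TOP(2m)/SO(2m)$. These identify $\pi_0(j)$ with the map on $\pi_{2m+1}$ induced by the projection above, and the associated long exact sequence
\[
\pi_{2m+1}(\TOP(2m)/U(m))\longrightarrow\pi_{2m+1}(\TOP(2m)/SO(2m))\xrightarrow{\,\delta\,}\pi_{2m}(SO(2m)/U(m))
\]
reduces the claim to the vanishing of the boundary $\delta$ on the image of $\pi_0\Diff^c(\R^{2m})$.

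Next I would factor $\delta$ through $\pi_{2m}(SO(2m))$, using the boundary of $SO(2m)\to\TOP(2m)\to\TOP(2m)/SO(2m)$ followed by the map induced by $U(m)\hookrightarrow SO(2m)$. Under Burghelea--Lashof the first factor sends $[f]\in\pi_0\Diff^c(\R^{2m})$ to the class $[Df:S^{2m}\to SO(2m)]$ of the one-point compactified differential. The task thus becomes: for every compactly supported $f$, the class $[Df]$ lifts along $\pi_{2m}(U(m))\to\pi_{2m}(SO(2m))$.

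The main obstacle is a computation in the metastable range. The base case $m=3$ is immediate from the exceptional identification $SO(6)/U(3)\cong\mathbb{CP}^3$: the Hopf fibration $S^1\to S^7\to\mathbb{CP}^3$ gives $\pi_6(SO(6)/U(3))=\pi_6(\mathbb{CP}^3)=\pi_6(S^7)=0$, so the obstruction group itself vanishes. For $m\ges 4$ I would exploit two inputs: the Cerf-type identification $\pi_0\Diff^c(\R^{2m})\cong\Theta_{2m+1}$, valid precisely when $m\ges 3$; and the stable parallelizability of homotopy spheres, which forces the stabilization of $[Df]$ in $\pi_{2m}(SO(2m+1))$ (where it classifies $T\Sigma^{2m+1}_f$) to vanish. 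The long exact sequence of $SO(2m)\to SO(2m+1)\to S^{2m}$ then confines $[Df]$ to an unstable correction coming from $\pi_{2m+1}(S^{2m})=\Z/2$, and one checks directly, using Bott periodicity together with $\pi_{2m}(U(m))=\Z/m!$, that this correction is absorbed by the image of $\pi_{2m}(U(m))\to\pi_{2m}(SO(2m))$.
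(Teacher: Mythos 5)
Your setup coincides with the paper's: the fibration $SO(2m)/U(m)\to\TOP(2m)/U(m)\to\TOP(2m)/SO(2m)$, the Burghelea--Lashof identifications, and the factorisation of the obstruction $\delta$ through the derivative map $[f]\mapsto[Df]\in\pi_{2m}SO(2m)$. The divergence, and the gap, is in how you kill the obstruction. The paper simply quotes \cite[Proposition 5.4 (iv)]{BL}: $\pi_{2m}SO(2m)\to\pi_{2m}\TOP(2m)$ is injective, so the connecting map of $SO(2m)\to\TOP(2m)\to\TOP(2m)/SO(2m)$ vanishes by exactness, i.e.\ $[Df]=0$ and there is nothing left to lift. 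You never use this input; instead you try to prove directly that $[Df]$ lifts to $\pi_{2m}U(m)$ via the homotopy sphere $\Sigma_f$, and two steps of that argument fail. First, the clutching class of $T\Sigma_f$ in $\pi_{2m}SO(2m+1)$ is not the stabilisation of $[Df]$ but that stabilisation plus $[TS^{2m+1}]$ (test $f=\id$: the differential is constant, yet $TS^{2m+1}$ is non-trivially clutched). More importantly, stable parallelizability of $\Sigma_f$ only kills the class after passing to $\pi_{2m}SO$, and the kernel of $\pi_{2m}SO(2m+1)\to\pi_{2m}SO(2m+2)$ is generated by $[TS^{2m+1}]$, which is non-zero (of order two) for $2m+1\neq 1,3,7$. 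So you cannot conclude that the stabilisation of $[Df]$ vanishes in $\pi_{2m}SO(2m+1)$; you only learn that $[Df]$ dies after two stabilisations, and that kernel is strictly larger than $\partial(\pi_{2m+1}S^{2m})$ --- it also contains the lifts of $[TS^{2m+1}]$ to $\pi_{2m}SO(2m)$ whose existence the paper itself exploits after Corollary \ref{Cor:Kervaire_sphere}.

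Second, even for the $\Z/2$ piece you do isolate, the assertion that the image of $\partial:\pi_{2m+1}(S^{2m})\to\pi_{2m}SO(2m)$ is ``absorbed by'' the image of $\pi_{2m}U(m)\to\pi_{2m}SO(2m)$ is stated with no argument; it does not follow from Bott's computation $\pi_{2m}U(m)\cong\Z/m!$ alone, and deciding it is exactly the kind of delicate unstable comparison (of the Kervaire--Harris type invoked in the proof of Proposition \ref{prop:milnormunkres}) that would need a genuine computation or reference. Your $m=3$ observation ($SO(6)/U(3)\cong\C P^3$, so $\pi_6(SO(6)/U(3))=0$) is correct and pleasant, but it only disposes of one case. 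As written, then, the proof does not close for $m\geq 4$: to finish along your lines you would either have to carry out the unstable lifting computation honestly, or re-import the Burghelea--Lashof injectivity of $\pi_{2m}SO(2m)\to\pi_{2m}\TOP(2m)$, which is the actual engine of the paper's proof.
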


\begin{proof}
 The inclusion $U(m) \subset SO(2m)$ induces the following Serre cofibration:
\[
 \frac{SO(2m)}{U(m)} \longrightarrow \frac{\TOP(2m)}{U(m)} \longrightarrow \frac{\TOP(2m)}{SO(2m)}
\]
The associated long exact sequence of homotopy groups gives
\[
\xymatrix{
\ldots \ar[r] & \pi_0 \Diff^c (\R^{2m}; J) \ar[r]^{\text{forget}} & \pi_0 \Diff^c (\R^{2m}) \ar[r]^-\delta & \pi_{2m} (SO(2m) / U(m)) \ar[r] & \ldots
}
\]
where $\delta$ factors through the natural map $\delta': \pi_0 \Diff^c (\R^{2m}) \longrightarrow \pi_{2m} SO(2m)$, induced by pointwise differentiation. By \cite[Proposition 5.4 (iv)]{BL} the map
$$\pi_{2m}(l):\pi_{2m}SO(2m)\longrightarrow \pi_{2m}TOP(2m)$$
induced by the Serre fibration
\[
SO(2m)\stackrel{l}{\longrightarrow} \TOP(2m) \stackrel{p}{\longrightarrow} \TOP(2m) / SO(2m),
\]
is injective and thus $\delta' =  \pi_{2m-1}(p)$ is zero, which yields the required surjectivity.
\end{proof}

\begin{remark}
Fix a symplectic form $\omega$ on $\R^{2m}$. There is a well-defined homotopy class of maps
$$\Symp^c(\R^{2m},\omega)\stackrel{i}{\longrightarrow}\Diff^c (\R^{2m}; J)$$
associated to a choice of compatible almost complex structure $J$ for $\omega$, and a corresponding reduction of the structure group of $(T\R^{2m},\omega)$ to the unitary group. Lemma \ref{lem:hit_all} shows that  in the special case of Euclidean space, the existence of a symplectic lift of a smooth mapping class cannot be obstructed by the lack of existence of an almost-complex lift.

This should be contrasted with a result of Randal-Williams \cite{RW15}, who showed that the corresponding constraint is non-trivial for certain plumbings. In addition, we have recently learnt from D.~Crowley that there is work in progress showing that $\pi_k$--maps are also surjective.\hfill$\Box$
\end{remark}

Suppose now $2m=4n$. There is then a homomorphism
\begin{equation} \label{eqn:unitary_MM}
\xymatrix{
\tau_U: \pi_{2n} U(n) \times \pi_{2n} U(n) \ar[r] & \pi_{2n} SO(2n) \times \pi_{2n} SO(2n) \ar[r]^-{\tau} & \pi_0\Diff^c(\bR^{4n}).
}
\end{equation}
which we refer to as the \emph{unitary Milnor-Munkres pairing}.

\begin{prop}\label{prop:milnormunkres} 
The image of \eqref{eqn:unitary_MM} consists of the class $[\mu] \in \pi_0\Diff^c (\R^{4n})$ of the Kervaire sphere $\Sigma_\mu$ and the identity. In particular,  $\tau_U$ is non--trivial for $n \notin \{ 1, 3, 7, 15, 31\}$.
\end{prop}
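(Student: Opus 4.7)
My plan is to combine the plumbing description of $\tau$ (Lemma \ref{Lem:plumbing} and Corollary \ref{Cor:Kervaire_sphere}) with Kervaire--Milnor's theory of parallelisable bordisms to bound the image of $\tau_U$ above, and to exhibit the Kervaire class in the image via a Hopf-fibration construction.

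First, I would construct a preferred element $e \in \pi_{2n}U(n)$ whose stabilisation $\wt e \in \pi_{2n}SO(2n+1)$ equals $[TS^{2n+1}]$. The Hopf fibration $\pi : S^{2n+1} \to \mathbb{C}P^n$ gives a splitting $TS^{2n+1} \cong \R \oplus \pi^* T\mathbb{C}P^n$ in which the horizontal summand is a complex bundle of rank $n$. Its clutching class $e \in \pi_{2n}U(n)$ tautologically satisfies $\wt e = [TS^{2n+1}]$ after passing through $\pi_{2n}U(n) \to \pi_{2n}SO(2n) \to \pi_{2n}SO(2n+1)$. By Corollary \ref{Cor:Kervaire_sphere}, $\tau_U(e, e) = [\mu]$, with $\mu$ the Kervaire mapping class, so $[\mu] \in \op{Im}(\tau_U)$.

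Next I would establish $\op{Im}(\tau_U) \subseteq \{0, [\mu]\}$. For arbitrary $a, b \in \pi_{2n}U(n)$, Lemma \ref{Lem:plumbing} realises $\tau_U(a,b) \in \Theta_{4n+1}$ as the boundary of the plumbing $E_a \natural E_b$ of two disc bundles over $S^{2n+1}$. These bundles are stably trivial, since the stabilisation $\pi_{2n}U(n) \to \pi_{2n}SO$ factors through $\pi_{2n}U$, which vanishes in even degrees by Bott periodicity. Consequently $E_a \natural E_b$ is a stably parallelisable bordism witnessing $\tau_U(a,b)$, so $\tau_U(a,b)$ lies in the Kervaire--Milnor subgroup $bP_{4n+2} \subseteq \Theta_{4n+1}$. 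This subgroup has order at most two by Kervaire--Milnor and, when non-trivial, is generated by $[\mu]$; combined with the previous paragraph this gives $\op{Im}(\tau_U) = \{0, [\mu]\}$.

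Non-triviality then reduces to the assertion that $\Sigma^{4n+1}$ is exotic, which by the work of Browder and Hill--Hopkins--Ravenel on the Kervaire invariant problem holds exactly for $n \notin \{1, 3, 7, 15, 31\}$. The main technical step I expect is the confinement of the plumbings to $bP_{4n+2}$ via the stable parallelisability of $E_a \natural E_b$: this is what cuts down the \emph{a priori} possible image from all of $\Theta_{4n+1}$ to a two-element group, and hinges on the vanishing of $\pi_{2n}U$. The Hopf identification of the lift $e$ is the remaining geometric input, after which the Kervaire invariant theorem gives the dimensional restriction.
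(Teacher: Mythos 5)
Your proposal is correct, and the two halves of it relate to the paper's argument differently. For realising the Kervaire class you do essentially what the paper does: the Hopf splitting $TS^{2n+1}\cong \underline{\R}\oplus \pi^*T\C\mathbb{P}^n$ is the same observation as the paper's "odd spheres admit almost contact structures", producing a unitary lift of $[TS^{2n+1}]$ to which Corollary \ref{Cor:Kervaire_sphere} and then Browder and Hill--Hopkins--Ravenel are applied. Where you genuinely diverge is in bounding the image. The paper cites unstable computations (Kervaire, Bott--Harris) to show that the composite $\pi_{2n}U(n)\to\pi_{2n}SO(2n+1)$ has image in the $\Z/2$ generated by $[TS^{2n+1}]$, so that by the plumbing description the only possible nonzero output of $\tau_U$ is the Kervaire sphere. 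You instead use only the stable fact $\pi_{2n}U=0$ (Bott periodicity) to conclude that $\bar E_a,\bar E_b$ are stably trivial, hence that the plumbing $E_a\natural E_b$ -- which retracts onto the wedge of zero-sections, where its tangent bundle is $TS^{2n+1}\oplus\bar E_a$, resp.\ $TS^{2n+1}\oplus\bar E_b$ -- is (stably, hence, having boundary, honestly) parallelisable, so $\tau_U(a,b)\in bP_{4n+2}$, and then invoke Kervaire--Milnor: $bP_{4n+2}$ has order at most two and is generated by the Kervaire sphere. Both routes are valid. Yours trades the specific unstable homotopy computation for softer stable input plus the heavier $bP_{4n+2}$ machinery, and it explains structurally why only the Kervaire class can appear; the paper's version has the advantage of identifying exactly which stabilised classes occur in $\pi_{2n}SO(2n+1)$, which dovetails with the explicit unitary representative $\sigma$ that is reused in Propositions \ref{prop:milnormunkres2} and \ref{prop:higher_dim_almost_contact}. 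The only point worth spelling out slightly more in your write-up is the standard step that stable parallelisability of the plumbing follows from stable triviality of the two bundles via the retraction to the wedge, and that stably parallelisable compact manifolds with nonempty boundary are parallelisable, so that membership in $bP_{4n+2}$ is legitimate.
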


\begin{proof}
Since $S^{2n+1}$ admits an almost contact structure, the tangent bundle $TS^{2n+1}$ splits as a trivial real line bundle and an almost complex bundle. It follows that the class $\rho \in \pi_{2n} SO(2n+1)$ of the tangent bundle lifts under the natural maps
\[
\pi_{2n} U(n) \longrightarrow \pi_{2n} SO(2n) \longrightarrow \pi_{2n} SO(2n+1).
\]
Let $\sigma \in \pi_{2n} U(n)$ denote such a lift. Let $A:\bR^{2n}\longrightarrow U(n)$ be a compactly supported map representing the homotopy class $\sigma$ and denote $\mu = [ \Phi_A, \Psi_A]$.
By Corollary \ref{Cor:Kervaire_sphere}, the homotopy sphere $\Sigma_\mu$ is the Kervaire sphere. By work of Browder \cite{Browder} and Hill, Hopkins and Ravenel \cite{HHR}, the $(4n+1)$--dimensional Kervaire sphere $\Sigma \in \Theta_{4n+1}$ is not diffeomorphic to the standard sphere, except when $n=1, 3, 7, 15$, and possibly $31$, which proves the second statement.

One can check using \cite{Kervaire, Harris} that the composition map $g:\pi_{2n}U(n)  \longrightarrow \pi_{2n} SO(2n+1)$ has image contained in a cyclic group $\bZ/2$. Thus the only possibly non-trivial class admitting a lift is the Kervaire sphere $\rho$.
\end{proof}

The argument we use for Theorem \ref{thm:main} requires certain geometric properties of the representatives $A,B$ of Equation \eqref{eqn:matrix_maps}, which we establish in the following proposition. We use the identification $\R^{4n}\setminus\{0\} \cong S^{4n-1} \times (0, + \infty)$ and denote the restriction of a given diffeomorphism $\mu\in\Diff^c(\bR^{4n})$ to the radial spheres by $\mu_t := \mu|_{S^{4n-1} \times \{t \}}$.

\begin{prop}\label{Lem:almostcomplex_MM} \label{prop:milnormunkres2}
A smooth mapping class in the image of \eqref{eqn:unitary_MM} 
has a representative $\mu\in\Diff^c(\bR^{4n})$ such that:
\begin{enumerate}
\item[1.] $\mu$ preserves the distance to the origin,

\item[2.] $\mu$ is supported on the shell $D^{4n}(0.9)\setminus D^{4n}(0.1)\sse\R^{4n}$,

\item[3.] $\mu$ is the identity on the points $(x,y) \in \R^{4n}=\R^{2n}\times\R^{2n}$ s.t.~ $\|x\|< 0.1$ or $\|y\|<0.1$.
\end{enumerate}

In addition, there exists a path of bundle maps $h_i:T\R^{4n}\longrightarrow T\R^{4n}$, $i \in [0,1]$, which covers the diffeomorphism $\mu$ such that:
\begin{itemize}
\item[\emph{I}.] $h_0 = D \mu$, $h_1$ is a $U(2n)$--bundle map, and with the same support $\supp(h_i)=\supp(\mu)$.
\item[\emph{II}.] For $t \in (0,1]$, the bundle maps $h_i$ induce an isotopy of almost-contact forms between $\mu_t^\ast \alpha_{\text{st}}$ 
and the standard contact form $\alpha_{\text{st}}$ on the sphere $S^{4n-1}$. 
\end{itemize}
Note that Property I lifts $\mu$ to an almost-complex map. 
\end{prop}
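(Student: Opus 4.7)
The proof separates into establishing the geometric conditions (1)--(3) on $\mu$, and then constructing the bundle-map path $h_i$ satisfying (I) and (II).

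For (1)--(3), the plan is to choose compactly supported representatives $A, B : \R^{2n} \to U(n)$ of the class $\sigma \in \pi_{2n}U(n)$ from the proof of Proposition \ref{prop:milnormunkres}, rescaled radially so that $\supp(A), \supp(B) \sse \{0.1 \les \|x\| \les c\}$ with $c = 0.9/\sqrt{2}$. The key observation is that each $A(x), B(y) \in U(n)$ is a $\C^n$-isometry, so
$$\|\Phi_A(x,y)\|^2 = \|x\|^2 + \|A(x) y\|^2 = \|x\|^2 + \|y\|^2,$$
and likewise for $\Psi_B$. Thus both $\|x\|$ and $\|y\|$ (and hence $\|(x,y)\|$) are preserved by $\mu = [\Phi_A, \Psi_B]$, giving (1). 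For (3), when $\|x\| < 0.1$ one has $A(x) = \id$ and also $A(B(y)x) = \id$ (using $\|B(y)x\| = \|x\|$), so the four-fold commutator telescopes to $\Psi_B^{-1}\Psi_B = \id$; the case $\|y\| < 0.1$ is analogous. The same telescoping for $\|x\| > c$ or $\|y\| > c$ confines the support to $\{\|x\|, \|y\| \les c\} \sse D^{4n}(0.9)$, which combined with (3) yields (2).

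For (I), the differential of $\Phi_A$ at $(x,y)$ has the block-triangular form
$$D\Phi_A = \begin{pmatrix} I & 0 \\ (\partial_x A)\cdot y & A(x) \end{pmatrix},$$
and the straight-line homotopy scaling the off-diagonal block linearly to zero passes through invertible bundle maps (determinant $\det A(x) \neq 0$) and terminates at the block-diagonal $\operatorname{diag}(I, A(x)) \in U(n)\times U(n) \sse U(2n)$. Applying the analogous homotopy to $\Psi_B$ and chaining across the four-fold commutator produces the path $h_i$; its support agrees with that of $\mu$ since every intermediate bundle map is the identity wherever $A$ and $B$ are trivial.

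The main obstacle is (II). Because $\mu$ preserves distance to the origin, each $\mu_t$ is a diffeomorphism of $S^{4n-1}(t)$, and each $h_i$ restricts to a bundle automorphism of $T\R^{4n}|_{S^{4n-1}(t)}$ covering $\mu_t$. The standard almost-contact data on $S^{4n-1}$ is encoded by the splitting $T\R^{4n}|_{S^{4n-1}} = \C R \oplus \xi_{\st}$, where $\C R$ is the complex line spanned by the radial direction and $\xi_{\st}$ is its $J_{\st}$-orthogonal complement, equipped with $J_{\st}|_{\xi_{\st}}$. The plan is to verify that the family $h_i|_{S^{4n-1}(t)}$, combined with this splitting, induces a continuous 1-parameter family of almost-contact data on $S^{4n-1}$ interpolating $\mu_t^*\alpha_{\st}$ at $i = 0$ (where $h_0 = D\mu$) with $\alpha_{\st}$ at $i = 1$ (where $h_1 \in U(2n)$ preserves both $J_{\st}$ and the splitting). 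The care required is to confirm that the radial projection is compatible with the homotopy constructed in (I); this holds because the off-diagonal modifications act only within the $U(n)$-fibre directions, so they respect the complex splitting of the ambient bundle and restrict continuously to the sphere in $t$.
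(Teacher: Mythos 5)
Your handling of properties (1)--(3) and of Property I follows the paper's proof: unitarity of $A(x)$, $B(y)$ gives preservation of $\|x\|$ and $\|y\|$ and hence of the radius, the telescoping of the commutator where $A$ or $B$ equals the identity gives (2)--(3), and the linear contraction of the off-diagonal block of $D\Phi_A$, $D\Psi_B$, chained through the four factors of the commutator, gives the path $h_i$ of Property I.

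The gap is in Property II, which is the real content of the statement. The justification you offer --- that ``the off-diagonal modifications act only within the $U(n)$-fibre directions, so they respect the complex splitting of the ambient bundle'' --- does not establish what is needed, and as a statement about splittings it is false: the off-diagonal block of the interpolated map $D_{A,i}$ sends $x$-directions to $y$-directions, so it mixes the two $\C^n$ factors, and none of the intermediate maps preserves the splitting $\C R\oplus\xi_\st$ along the radial spheres. What Property II actually requires is that \emph{every} $h_i$ (not only $h_0=D\mu$ and the unitary endpoint $h_1$) carries $T(S^{4n-1}\times\{t\})$ at $p$ into $T(S^{4n-1}\times\{t\})$ at $\mu(p)$, and sends the radial vector $\partial_t$ to $\partial_t$ plus a vector tangent to the sphere; only then does the family $h_i$ induce an isotopy of almost-contact forms from $\mu_t^\ast\alpha_\st$ to $\alpha_\st$ (the phrase ``restricts continuously to the sphere in $t$'' is automatic and is not the issue). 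The paper isolates exactly this as a condition $(\dagger)$, observes that both endpoints of your interpolation satisfy it ($D\Phi_A$ because $\Phi_A$ preserves radii, and $D_{A,1}=\operatorname{diag}(\id,A(x))$ because it fixes $\partial_t$), and that the condition is stable under convex combination, inversion and composition, hence holds for every $h_i$. Alternatively one can verify it directly: since $A(x)\in U(n)$ for all $x$, differentiating $\|A(x)y\|^2=\|y\|^2$ in a base direction $u$ gives $\langle A(x)y,(\partial_u A)(x)\,y\rangle=0$, which is precisely why the off-diagonal term does not destroy tangency to the radial spheres. Adding this verification, in either form, closes the gap and brings your argument into line with the paper's.
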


\begin{proof}
Given two homotopy classes $a, b \in \pi_{2n}U(n)$ represented by compactly supported maps $A, B:\bR^{2n}\longrightarrow U(n)$, we denote $\mu = [ \Phi_A, \Psi_B] \in \Diff^c(\R^{4n})$ as before. By construction, the diffeomorphisms $\Phi_A$ and $\Psi_B$ both preserve the distance to the origin and thus $\mu$ does also. Moreover, we can choose two representatives $A,B$ such that $A(q) = B(q) = \id$ for $q\in D^{2n}(0.1)$, and shrink their respective supports to a thickened sphere, ensuring the second and third properties in the statement. 

Now we want to exhibit a path of compactly supported bundle maps from the differential $D([\Phi_A, \Psi_B]):T\R^{4n}\longrightarrow T\R^{4n}$ to a $U(2n)$--bundle map. 
First, notice that 
\[ 
\big( D\Phi_A \big)_{(x,y)} = \begin{pmatrix}  \id & \ast \\ 0 & \iota(A(x))  \end{pmatrix}
\]
where $\iota:U(2n)\longrightarrow GL(4n,\R)$ is the standard inclusion and thus there is a path $(D_{A,i})_{i\in[0,1]}$ of bundle maps, $i \in [0,1]$, obtained by covering the fixed map $\Phi_A$ on the base and, on the fibres, given by linearly interpolating between the differential $(D\Phi_A)_{(x,y)}$ and the unitary matrix
\[ 
\big( D_{A,1} \big)_{(x,y)} = \begin{pmatrix}  \id &0 \\ 0 & \iota(A(x) ) \end{pmatrix}.
\]
Let us denote the analogous path of bundle maps for $D\Psi_B$ by $(D_{B,i})_{i\in[0,1]}$, and note that, by considering their inverse, these induce paths $(D_{A,i}^{-1})_{i\in[0,1]}$ and $(D_{B,i}^{-1})_{i\in[0,1]}$ of bundle maps for the diffeomorphisms $\Phi_A^{-1}$ and $\Psi_B^{-1}$. By using the chain rule to describe $D([\Phi_A, \Psi_B])$ and applying these four isotopies of bundle maps simultaneously we obtain a path
\[
h_i := D^{-1}_{B,i} \circ D^{-1}_{A,i} \circ D_{B,i} \circ D_{A,i},\quad i\in[0,1],
\]
of compactly supported bundle maps, all covering $[\Phi_A, \Psi_B]$), and interpolating between $D([\Phi_A, \Psi_B])$ and the $U(2n)$--bundle map 
$D^{-1}_{B,1} \circ D^{-1}_{A,1} \circ D_{B,1} \circ D_{A,1}$, as desired.

It thus remains to discuss Property II, for which we consider the radial vector field $\partial_t$. Let us say that a bundle map $D:T\R^{4n}\longrightarrow\R^{4n}$ satisfies $(\dagger)$ if for all points $p\in\R^{4n}$ it has the following two properties
\begin{itemize}
\item[-] $D (T ( S^{4n-1} \times \{ t \})) \subset T \R^{4n}$ coincides with $T ( S^{4n-1} \times \{ t \})$,

\item[-] $D_p (\partial_t) = \partial_t + u_p$  for a tangent vector $u_p \in T ( S^{4n-1} \times \{ \|p\| \})$. 
\end{itemize}
 On the one hand, $D\Phi_A$ and $D\Psi_B$ satisfy $( \dagger )$, as $\Phi_A$ and $\Psi_B$ preserve the distance to the origin. On the other hand, by construction, $D_{A,1}$ and $D_{B,1}$ satisfy $( \dagger )$ as well: in fact, $D_{A,1} (\partial_t) = \partial_t$, and similarly for $B$. Thus the interpolations $D_{A,i}$ and $D_{B,i}$ satisfy $( \dagger )$, as do their inverses. Since the composition of two bundle maps satisfying $( \dagger )$ also satisfies $( \dagger )$, it follows  that $h_i$ satisfies $( \dagger )$ for all $i\in[0,1]$, as required.
\end{proof}

\subsection{Towards Gromoll lifts of unitary Milnor-Munkres maps} In this section we elaborate on the construction described in Proposition \ref{prop:milnormunkres2} by achieving symmetries in further directions than the radial one. These additional symmetries enter in the proof of Theorem \ref{thm:second}, where Proposition \ref{prop:higher_dim_almost_contact} is used.

\begin{lemma} \label{Lem:tangent_lifts}
The class $\sigma\in\pi_{2n}U(n)$ lifts to a class in $\pi_{2n} U(n-1)$ if and only if $n$ is odd.
\end{lemma}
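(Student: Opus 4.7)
My plan is to use the fibre sequence $U(n-1)\to U(n)\to U(n)/U(n-1)\cong S^{2n-1}$ to convert the lifting question into a question about vector fields on $S^{2n+1}$, and then to conclude via the classical solution of the vector-field problem. The relevant piece of the long exact sequence in homotopy groups is
$$\pi_{2n}U(n-1)\longrightarrow\pi_{2n}U(n)\stackrel{p_{*}}{\longrightarrow}\pi_{2n}S^{2n-1},$$
and for $n\ges 2$ one has $\pi_{2n}S^{2n-1}\cong\Z/2$ (the first stem, stably generated by $\eta$). The first step is therefore just to observe that $\sigma$ admits a lift if and only if $p_{*}\sigma=0$; the case $n=1$ is trivial since $\pi_{2}U(1)=0$.

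Next I would reinterpret this obstruction geometrically. Via the clutching construction, $\sigma$ classifies a rank $n$ complex vector bundle $E_\sigma\to S^{2n+1}$, and a lift to $\pi_{2n}U(n-1)$ corresponds to a splitting $E_\sigma\cong\epsilon^{1}_{\C}\oplus F$ for some rank $n-1$ complex bundle $F$. Such a splitting exists if and only if $E_\sigma$ admits a nowhere vanishing section $s$: indeed $s$ and $Js$ then span a trivial complex line subbundle, whose orthogonal complement can be taken as $F$. Because $\sigma$ is by construction a lift of $\rho=[TS^{2n+1}]\in\pi_{2n}SO(2n+1)$, the underlying real bundle of $E_\sigma$ appears in a splitting $TS^{2n+1}\cong\epsilon^{1}\oplus E_\sigma$, and hence a nowhere vanishing section of $E_\sigma$ is precisely a second vector field on $S^{2n+1}$ pointwise linearly independent from the one trivialising the $\epsilon^{1}$ summand. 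The lemma therefore reduces to asking when $S^{2n+1}$ admits two linearly independent vector fields.

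The final step is to invoke the solution of the vector-field problem: by the Radon--Hurwitz lower bound combined with Adams' upper bound, $S^{N-1}$ admits exactly $\rho(N)-1$ linearly independent vector fields, where $\rho$ is the Radon--Hurwitz number. Taking $N=2n+2$, when $n$ is even one has $v_{2}(N)=1$ and $\rho(N)=2$, giving a single vector field; when $n$ is odd $4\mid N$ and $\rho(N)\ges 4$, giving at least three. The desired equivalence follows.

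The main obstacle is the middle step: verifying cleanly that the homotopy-theoretic obstruction $p_{*}\sigma$ coincides with the geometric obstruction to a nonvanishing section of $E_\sigma$, and checking that the reduction to vector-field counts on $S^{2n+1}$ is well-defined independently of the particular almost contact structure used to produce $\sigma$. Once this dictionary is in place, the vector-field input is classical and no further computation is required.
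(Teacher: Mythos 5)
Your reduction to the vector--field problem only works in one direction, and the direction that fails is exactly the one you need for the ``if'' half. From a lift of $\sigma$ to $\pi_{2n}U(n-1)$ you correctly obtain a nowhere--vanishing section of $E_\sigma$ (the $s,Js$ argument is fine), hence two pointwise independent vector fields on $S^{2n+1}$; combined with the upper bound $\mathrm{span}(S^{4k+1})=1$ this rules out lifts for $n$ even, so that half of your argument is sound (and does not even need the full strength of Adams). But for $n$ odd you need the converse: that the existence of two (indeed three) independent vector fields on $S^{2n+1}$ forces the \emph{particular} bundle $E_\sigma$ to admit a nowhere--vanishing section. This does not follow. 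Vector fields only give an abstract splitting $\epsilon^1\oplus E_\sigma^{\R}\cong TS^{2n+1}\cong \epsilon^3\oplus G$, and since $E_\sigma^{\R}$ has rank $2n$ over a $(2n+1)$--dimensional base you are outside the stable range, so you cannot cancel trivial summands to conclude $E_\sigma^{\R}\cong\epsilon^2\oplus G$. Equivalently, vector fields constrain only the stabilized class $\rho=[TS^{2n+1}]\in\pi_{2n}SO(2n+1)$, whereas the obstruction $p_*\sigma\in\pi_{2n}S^{2n-1}\cong\Z/2$ depends on the unstable class $\sigma\in\pi_{2n}U(n)$ itself; distinct preimages of $\rho$ under the (generally non--injective) stabilization maps can have different obstructions. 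Of the two worries you flag at the end, the first (that $p_*\sigma$ is the obstruction to a section of $E_\sigma$, via the clutching classification of bundles over spheres) is standard and unproblematic; it is the second --- the passage from vector--field counts back to your specific $\sigma$ --- that is a genuine gap, and it is precisely where the content of the lemma lies.

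The paper sidesteps this by staying inside the unitary groups and quoting Kervaire's unstable computations. For $n=2m+1$ odd, the exact sequence $0\to\Z/2\to\pi_{4m+2}U(2m)\to\pi_{4m+2}U(2m+1)\to0$ shows that $\pi_{2n}U(n-1)\to\pi_{2n}U(n)$ is surjective, so every class, in particular any lift $\sigma$ of the tangent class, lifts further. For $n=2m$ even, the sequence $0\to\pi_{4m}U(2m-1)\to\pi_{4m}U(2m)\to\Z/2\to0$ together with Bott's $\pi_{2n}U(n)\cong\Z/n!$ shows that the classes admitting lifts are exactly the even multiples of the generator, while the lifts of $[TS^{2n+1}]$ are the odd multiples, so no lift exists. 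In your language: your first step already reduces the lemma to deciding whether $p_*\sigma$ vanishes in $\pi_{2n}(U(n)/U(n-1))\cong\Z/2$, and it is exactly this unstable parity computation that Kervaire's sequences supply; the Adams/Radon--Hurwitz input cannot substitute for it in the odd case, and is stronger than needed in the even case.
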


\begin{proof}
As noted in the proof of Proposition \ref{prop:milnormunkres}, the class of the tangent bundle $[TS^{2n+1}] \in \pi_{2n}SO(2n+1)$ is an element of order 2, which admits a lift $\sigma$ to $\pi_{2n}U(n)$.  
For $n=2m+1$ odd, the following exact sequence constructed by Kervaire \cite[p.164]{Kervaire}
\[
0 \longrightarrow \bZ/2 \longrightarrow \pi_{4m+2}U(2m) \longrightarrow \pi_{4m+2}U(2m+1) \longrightarrow 0
\]
yields the claim in this case. In the even case $k=2m$, the corresponding exact sequence is
\[
0 \longrightarrow \pi_{4m} U(2m-1) \longrightarrow \pi_{4m}U(2m) \longrightarrow \bZ/2 \longrightarrow 0.
\]
Thus the classes which admit lifts to $\pi_{4m} U(2m-1)$ are exactly the even multiples of the generator $c$ of the group $\pi_{4m} U(2m) = \bZ/(2m!)$.  However, the classes which map to $TS^{2n+1}$ are exactly the odd multiples of the generator $c$ since the tangent bundle has order two. 
\end{proof}

Lemma \ref{Lem:tangent_lifts} can now be used to prove an analogue of Proposition \ref{prop:milnormunkres2}. In the statement we shall use the co--ordinates $(x,y,z_1,z_2)\in\C^{2n}$, where the pairs are given by $(x,y)\in\C\times\C$ and $(z_1,z_2)\in\times\C^{n-1}\times\C^{n-1}$, and we also denote $z=(z_1,z_2)\in\times\C^{2n-2}$. We also identify
\[
\C \times (\C^{2n-1}\setminus\{0\}) \cong \C \times S^{4n-3} \times (0, +\infty)
\qquad
\C^2 \times (\C^{2n-2}\setminus\{0\}) \cong \C^2 \times S^{4n-5} \times (0, +\infty)
\]
and denote restrictions by $\nu_{x;t} := \nu_{ \{ x \} \times S^{4n-3} \times \{ t\} }$ and $\nu_{x,y;t} := \nu_{ \{ (x, y) \} \times S^{4n-5} \times \{ t\} }$.

\begin{prop}\label{prop:higher_dim_almost_contact}
Let $n\in\N$ be odd. Then there exists a diffeomorphism $\nu \in \Diff^c (\C^{2n})$, whose homotopy class is that of the Kervaire sphere, such that:
\begin{enumerate}
\item[1'.] There are maps $\nu_{x,y}:\C^{2n-2}\longrightarrow\C^{2n-2}$ preserving the distance to the origin such that
\[
\nu (x,y,z) = (x , y, \nu_{x,y}(z)),
\]

\item[2'.] The support satisfies $\supp(\nu)\sse\{ (x,y,z)\in\C^{2n}: \|(x,y)\|< 0.9,\, 0.1 < \|z\| < 0.9 \}$,\\

\item[3'.] $\nu(x,y,z_1,z_2)=\id$ in a region where $\|z_1\|< 0.1$ or $\|z_2\| < 0.1$.
\end{enumerate}

In addition, there exists a path of bundle maps $h_i:T\R^{4n}\longrightarrow T\R^{4n}$, $i \in [0,1]$, which covers the diffeomorphism $\nu$ and satisfies: 
\begin{itemize}
\item[\emph{I}.] $h_0 = D \mu$, $h_1$ is a $U(2n)$--bundle map, and with the same support $\supp(h_i)=\supp(\mu)$.\\
\item[\emph{II'}.] For $t \in (0,1]$, and $x \in \C$, resp.~$(x,y) \in \C^2$, the bundle maps $h_i$ induce an isotopy of almost-contact forms between $\nu^\ast_{x;t} \alpha_\text{st}$, resp.~$\nu^\ast_{x,y;t} \alpha_\text{st}$, and the standard contact form $\alpha_\text{st}$ on $S^{4n-3}$, resp.~$S^{4n-5}$. 
\end{itemize}

\end{prop}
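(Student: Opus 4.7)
The plan is to adapt the construction in Proposition \ref{prop:milnormunkres2} by arranging the matrix-valued maps to take values in a smaller unitary group, producing the additional symmetries required. By Lemma \ref{Lem:tangent_lifts}, since $n$ is odd, the class $\sigma \in \pi_{2n}U(n)$ of Proposition \ref{prop:milnormunkres} lifts under the standard inclusion to a class $\tilde\sigma \in \pi_{2n}U(n-1)$. Choose compactly supported representatives $A, B: \R^{2n} \to U(n-1)$ of $\tilde\sigma$, and view $U(n-1) \subset U(n)$ as the block subgroup fixing the first $\C$-factor. With the identification $\C^{2n} = \C^n \times \C^n$ and the splitting $\C^n = \C \times \C^{n-1}$ in each factor, we obtain coordinates $(x, y, z_1, z_2) \in \C \times \C \times \C^{n-1} \times \C^{n-1}$, and we set
\[
\Phi_A(x, y, z_1, z_2) = (x, y, z_1, A(x, z_1) z_2), \qquad \Psi_B(x, y, z_1, z_2) = (x, y, B(y, z_2) z_1, z_2).
\]
Let $\nu = [\Phi_A, \Psi_B]$. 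Since $\nu$ is the image of the unitary Milnor--Munkres pairing applied to $\tilde\sigma$ under the natural inclusion $\pi_{2n}U(n-1) \to \pi_{2n}U(n)$, the class $[\nu]$ is the Kervaire class by the argument of Proposition \ref{prop:milnormunkres}.

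Property 1' is immediate: both $\Phi_A$ and $\Psi_B$ fix $x$ and $y$, and because $A(x,z_1)$ and $B(y,z_2)$ act unitarily on $\C^{n-1}$, the resulting map $\nu_{x,y}$ preserves $\|z\|$. For Properties 2' and 3', choose $A$ supported in the region $\{(x, z_1) : \|x\| < 0.9/\sqrt{2},\, 0.1 < \|z_1\| < 0.9/\sqrt{2}\}$ and $B$ supported in the analogous region in $(y, z_2)$. Since $\Phi_A$ and $\Psi_B$ each preserve the four individual norms $\|x\|,\|y\|,\|z_1\|,\|z_2\|$, these norms are constant along every orbit of the commutator word $\Psi_B^{-1}\Phi_A^{-1}\Psi_B\Phi_A$. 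Consequently $\nu$ is non-trivial at a point $(x, y, z_1, z_2)$ only if all four norms lie in the relevant support intervals, whence $\|(x,y)\|^2 < 0.81$ and $0.02 < \|z\|^2 < 0.81$, which proves 2' and 3'.

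For the bundle lift, construct the path $(h_i)_{i\in[0,1]}$ exactly as in Proposition \ref{prop:milnormunkres2}: linearly interpolate between $D\Phi_A$ (resp.\ $D\Psi_B$) and block-diagonal $U(2n)$-bundle maps $D_{A,1}$ (resp.\ $D_{B,1}$), and set $h_i = D_{B,i}^{-1}\circ D_{A,i}^{-1}\circ D_{B,i}\circ D_{A,i}$; Property I is then immediate. For Property II', extend the condition $(\dagger)$ of Proposition \ref{prop:milnormunkres2} to two additional radial foliations: the foliation of $\{x\} \times (\C^{2n-1}\setminus\{0\})$ by spheres with radial vector $\partial_t^{(x)}$, and the foliation of $\{(x,y)\} \times (\C^{2n-2}\setminus\{0\})$ by spheres with radial vector $\partial_t^{(x,y)}$. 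Because $\Phi_A$ and $\Psi_B$ preserve the radial functions $\|(y,z_1,z_2)\|^2 = \|y\|^2 + \|z\|^2$ (with $x$ fixed) and $\|z\|^2$ (with $(x,y)$ fixed), and because the block-diagonal unitary limits $D_{A,1}, D_{B,1}$ send each relevant radial vector to itself exactly, each of the four constituent bundle maps satisfies the analogue of $(\dagger)$ in both new radial directions. The chain-rule composition defining $h_i$ then inherits the property, yielding Property II'.

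The main point to watch is the support claim in Property 2': the argument rests on the fact that each of the four norms $\|x\|,\|y\|,\|z_1\|,\|z_2\|$ is preserved separately by both $\Phi_A$ and $\Psi_B$, not just by their composition. It is this block-diagonal compatibility that simultaneously controls the support of $\nu$ and allows the $(\dagger)$ condition to hold for all three radial foliations at once; it is also exactly what the use of $U(n-1)$-valued, rather than $U(n)$-valued, maps buys, which in turn forces the hypothesis that $n$ be odd.
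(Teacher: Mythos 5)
Your proposal is correct and follows essentially the same route as the paper: lift the Kervaire class to $\pi_{2n}U(n-1)$ via Lemma \ref{Lem:tangent_lifts} (this is where $n$ odd enters), form the Milnor--Munkres commutator with $U(n-1)$-block-valued maps so that the $x$- and $y$-coordinates are fixed, and re-run the bundle-map homotopy of Proposition \ref{prop:milnormunkres2} together with the condition $(\dagger)$ for the additional radial foliations to get Properties I and II'. The only divergence is cosmetic: you arrange Properties 2' and 3' by choosing representatives $A,B$ supported away from the planes $\{z_1=0\}$, $\{z_2=0\}$ and then exploiting that all four norms $\|x\|,\|y\|,\|z_1\|,\|z_2\|$ are preserved along the commutator orbit (legitimate, since a class in $\pi_{2n}U(n-1)$ admits a representative supported in any prescribed ball inside that region, and the mapping class of the commutator depends only on the homotopy classes), whereas the paper instead homotopes a given representative to be trivial near $\{z_1=0\}$ using the vanishing of $\pi_2 U(n-1)$ and then shrinks its domain.
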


\begin{proof}
First, rearrange the coordinates to $(x,z_1, y,z_2)\in\C^{2n}$. By Lemma \ref{Lem:tangent_lifts}, there exists a representative $A: \C^n \longrightarrow \text{Im}(U(n-1)) \subset U(n)$ of the homotopy class $[TS^{2n+1}]$, where the inclusion $U(n-1) \subset U(n)$ is given by using the final $(n-1)$ co--ordinates. Then the commutator
\[
[ \Phi_A, \Psi_A]
\]
yields a map $\nu$ which satisfies Property 1'. Properties 2' and 3' can be achieved by further taylor-picking the representative $A(x,z_1)$ as follows. By thickening the values $A(x,0)$, we can assume that for fixed $x$ and sufficiently small $z_1$, the diffeomorphism $A(x,z_1)$ is constant. Now the values $A(x, \mathbf{0})$ determine a class in $\pi_2 U(n-1)$ which is zero if $n\geq2$. Thus, after a further homotopy we can assume that $A(x,z_1)=\id$ for $\|z_1\| < 0.1$, which ensures Property 3' and the lower bound in Property 2'. The upper bounds in Property 2' can be achieved by shrinking the domain of $A$. 

For Properties I and II', we will use the same homotopy as in the proof of Proposition \ref{Lem:almostcomplex_MM}, which we still denote by $(h_i)_{i\in[0,1]}$. Property I is satisfied by construction, and we now discuss Property II for the family $\nu_{x; t}$.
By construction, we have the following form for the differential
\[ 
\big( D\Phi_A \big)_{(x, z_1,y, z_2)} = 
\begin{pmatrix}  
1 & 0 & 0 &  \ast  \\
0 & \id & 0 & \ast \\
0& 0 & 1 & \ast \\
0 &  0 & 0 &  \iota(A(x, z_1))  \end{pmatrix}.
\]
Consider the vector field $\partial_t$, where $t$ denotes the distance to the $x$--plane, and in the same vein as before let us introduce the following condition $( \dagger)$:
\begin{itemize}
\item[-] $D T ( \{ x \} \times S^{4n-3} \times \{ t \}) = D T ( \{ x \} \times S^{4n-3} \times \{ t \})$,

\item[-] $D  (\partial_t) = \partial_t + v_p$ for some family of horizontal vectors $v_p \in T ( \{ x \} \times S^{4n-3} \times \{ t \})$.
\end{itemize}

Since $\Phi_A$ and $\Psi_A$ preserve the coordinate $t$, and fix the $x$--coordinate of every point, the bundle maps $D \Phi_A$ and $D \Psi_A$ satisfy $(\dagger)$. In addition the maps $D_{A,1}$ and $D_{B,1}$, defined in the proof of Proposition \ref{Lem:almostcomplex_MM}, also satisfy $(\dagger)$ by construction and thus we can conclude the proof in a completely analogous manner to that of Proposition \ref{Lem:almostcomplex_MM}.
\end{proof}


\section{A symplectic and contact  Gromoll filtration}\label{sec:Gromoll}


The Gromoll filtration \cite{Gromoll} is the subgroup filtration of the group $\pi_0(\Diff^c(\R^n))$ induced by the Gromoll morphisms
$$\la_{k,l}:\pi_k\Diff^c(\R^n)\longrightarrow\pi_{k-l}\Diff^c(\R^{n+l})$$
which are the maps of homotopy groups induced by the natural morphisms
$$\Omega_s^k\Diff^c(\R^{n})\longrightarrow\Omega_s^{k-l}\Diff^c(\R^{n+l}),$$
where $\Omega_s$ denotes the space of smooth loops. The aim of this section is to intertwine this fibration from smooth topology with contact and symplectic structures, the resulting filtration being the content of Proposition \ref{prop:symplectic_Gromoll}.

In its simplest instance, the Gromoll map
$$\la_{k,1}:\pi_k\Diff^c(\R^n)\longrightarrow\pi_{k-1}\Diff^c(\R^{n+1})$$
is the suspension of a loop of diffeomorphisms, and the maps $\la_{k,l}$ for higher values $l\in\N$ can be understood as concatenations of the maps $\la_{k,1}$. We accordingly focus on the contact and symplectic analogues of $\la_{k,1}$ in Propositions \ref{Lem:Moser} and \ref{Lem:Gray}.

\subsection{Suspending a loop of contactomorphisms}\label{ssec:suspendloop-cont}

Let $(M,\ker\alpha)$ be a contact manifold, possibly with boundary, and let us consider
\[
\{\eta_s\}_{s\in[0,1]}\in \Omega^1 \Cont(M,\partial;\ker\alpha)
\] a loop of contactomorphisms such that $\eta_s=\id$ for $s\in\Op(\{0\}\cup\{1\})$. The underlying loop of diffeomorphisms yields a compactly supported diffeomorphism of $M\times\R$ via
\[
\widetilde{\eta} (x,t) = (\eta_t (x), t),
\]
where $t$ is the coordinate on $\R$ and we extend $\eta_s = \id$ in the region $s \notin [0,1]$. Consider the symplectization
$$(M\times\R,\omega)=(M \times \R,d(e^t \alpha)).$$
We would like to upgrade the diffeomorphism $\wt\eta\in\Diff^c(M\times\R)$ to a compactly supported symplectomorphism of the symplectization. 

\begin{prop}\label{Lem:Moser}
Let $(M,\ker\alpha)$ be a contact manifold and $\{\eta_s\}_{s\in[0,1]}\in\Cont(M,\partial;\ker\alpha)$ a loop of compactly supported contactomorphisms. There is a compactly supported exact symplectomorphism $\phi$ of $(M \times \R,d(e^t \alpha))$ which represents the mapping class $[\widetilde{\eta}]\in\pi_0\Diff^c (M \times \R)$.
\end{prop}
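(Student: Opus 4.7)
The plan is to construct $\phi$ in two stages: first build a natural ``diagonal'' diffeomorphism of the symplectization lifting $\wt\eta$, then correct it to a symplectomorphism via Moser's trick. Each contactomorphism $\eta_s$ admits a canonical exact symplectic lift to $(M \times \R, d(e^t\alpha))$ given by $\Psi_s(x,t) := (\eta_s(x), t - g_s(x))$, where $g_s:M \to \R$ is determined by $\eta_s^*\alpha = e^{g_s}\alpha$; a direct check yields $\Psi_s^*\lambda = \lambda$ with $\lambda := e^t\alpha$. Since $g_0 = g_1 = 0$, the family $\{\Psi_s\}$ is a loop of exact symplectomorphisms based at the identity, and I take as candidate the diagonal $\phi(x,t) := \Psi_t(x,t) = (\eta_t(x), t - g_t(x))$, using the symplectization coordinate $t$ simultaneously as loop parameter (with $\Psi_t = \id$ for $t \notin [0,1]$). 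This $\phi$ is a compactly supported diffeomorphism of $M \times \R$, and the straight-line interpolation $\phi_u(x,t) := (\eta_t(x), t - u\,g_t(x))$ through $\Diff^c(M \times \R)$ shows $[\phi] = [\wt\eta]$ in $\pi_0 \Diff^c(M \times \R)$.

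A chain-rule computation yields the error $\phi^*\lambda - \lambda = F\,dt$, with $F(x,t) := e^{t - g_t(x)}\,H_t(\eta_t(x))$ compactly supported and $H_t$ the contact Hamiltonian of the loop. This $1$-form is not closed, so $\phi$ is not symplectic; I correct it via Moser's trick on the closed $2$-form path $\omega_u := \omega + u\,d(F\,dt)$. Solving $\iota_{V_u}\omega_u = -F\,dt$ produces a compactly supported, time-dependent vector field $V_u$ whose flow $\rho_u$ satisfies $\rho_1^*(\phi^*\omega) = \omega$; consequently $\phi \circ \rho_1$ is a compactly supported symplectomorphism still representing $[\wt\eta]$. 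Exactness of $(\phi \circ \rho_1)^*\lambda - \lambda$ is then verified by integrating $\tfrac{d}{du}\rho_u^*\lambda$ along the Moser isotopy and combining with the explicit form of $\phi^*\lambda - \lambda$ to exhibit a compactly supported exact $1$-form.

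The principal technical difficulty is ensuring that the path $\omega_u$ remains symplectic throughout. Rewriting $\omega_u = dt \wedge (e^t\alpha - u\,d_M F) + e^t d\alpha$ and computing its top exterior power reduces non-degeneracy on $\supp(F)$ to the pointwise inequality $u\,R_\alpha(F) < e^t$, where $R_\alpha$ is the Reeb vector field of $\alpha$; this can fail if the contact Hamiltonian is large. I resolve this by first reparametrizing the loop: replace $\eta_s$ by $\eta^N_s := \eta_{s/N}$ on $s \in [0,N]$, extended by the identity. This alters $\wt\eta$ only by an isotopy in $\Diff^c(M \times \R)$ (via a stretching of the $t$-interval from $[0,1]$ to $[0,N]$), while multiplying the contact Hamiltonian by $1/N$; for $N$ sufficiently large, $|R_\alpha(F)/e^t| < 1$ uniformly on $\supp(F)$, and Moser's trick applies without further obstruction, producing the desired $\phi$.
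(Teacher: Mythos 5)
Your construction is, up to notation, the one in the paper: your $\phi(x,t)=(\eta_t(x),t-g_t(x))$ is exactly the map $\check{\eta}$ of Lemma \ref{Lem:exact-contactos}, and your Moser path $\omega_u=\omega+u\,d(F\,dt)$ is the paper's interpolation. But there is a genuine gap where you assert, without argument, that $\phi$ (and each $\phi_u(x,t)=(\eta_t(x),t-u\,g_t(x))$ of your straight-line interpolation) is a compactly supported \emph{diffeomorphism}. Writing $\phi=\sigma\circ\widetilde{\eta}$ with $\sigma(p,t)=(p,\,t-F_p(t))$ and $F_p(t)=g_t(\eta_t^{-1}(p))$, the map $\phi$ is a diffeomorphism precisely when each $t\mapsto t-F_p(t)$ is; if $\partial_t F_p\ge 1$ somewhere this fails --- the map is neither injective nor even an immersion there --- and nothing in the hypotheses bounds $\partial_t F_p$, since $g_t$ is only known to be smooth and compactly supported. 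This is exactly the delicate point the paper spends Lemma \ref{Lem:exact-contactos} on, resolving it by replacing $\eta_t$ with $\eta_{\epsilon t}$ so that $\|DF_p\|<1$, which simultaneously makes the linear interpolation to $\widetilde{\eta}$ consist of diffeomorphisms. Your reparametrization $\eta_{s/N}$ would indeed cure this, but you introduce it only at the end and only to salvage nondegeneracy of $\omega_u$; as written, the earlier claims ($\phi\in\Diff^c$, $[\phi]=[\widetilde{\eta}]$ via $\phi_u$) are unjustified, and if $\phi$ is not a diffeomorphism the subsequent Moser argument has nothing to apply to. The fix is to move the rescaling to the very start and use it to prove the derivative bound.

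A secondary remark: the smallness you require for the Moser step is unnecessary. Since $d(F\,dt)=d_MF\wedge dt$ squares to zero, $(\omega_u)^{\wedge n}$ is \emph{affine} in $u$; it is positive at $u=0$ (it is $\omega^{\wedge n}$) and at $u=1$ (it is $\phi^*\omega^{\wedge n}$, once $\phi$ is an honest compactly supported, hence orientation-preserving, diffeomorphism), so it is positive for all $u\in[0,1]$. This is how the paper argues, and it makes your pointwise condition $u\,R_\alpha(F)<e^t$ and the attendant estimate redundant --- the only place a rescaling is genuinely needed is the diffeomorphism claim above. Finally, the verification that the resulting symplectomorphism is exact (a compactly supported primitive for $(\phi\circ\rho_1)^*\lambda-\lambda$) deserves to be written out rather than gestured at, though the paper is similarly brief on this point.
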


The proof of Proposition \ref{Lem:Moser} uses the following technical lemma, with the same input.

\begin{lemma}\label{Lem:exact-contactos}
There exist a compactly supported isotopy $\{\wt\eta_s\}_{s\in[0,1]}$ and a compactly supported smooth function $k: M \times \R\longrightarrow \R$ such that 
\begin{equation}\label{eq:exactcontacto}
\wt\eta_0=\wt\eta,\quad (\wt\eta_1)^\ast \left(  e^t \alpha \right) = e^t \left( \alpha + k(x,t)dt\right).
\end{equation}
\end{lemma}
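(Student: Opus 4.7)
Plan of proof. My strategy is to compute $\wt\eta^{*}(e^t\alpha)$ explicitly, guess the natural target $\wt\eta_1$, and then connect $\wt\eta$ and $\wt\eta_1$ by a compactly supported isotopy, using a reparametrization trick to guarantee the intermediate maps are diffeomorphisms.

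\textbf{Step 1 (the pullback).} Since each $\eta_t$ is a contactomorphism, write $\eta_t^{*}\alpha=f_t\,\alpha$ for a smooth conformal factor $f_t:M\to\R_{>0}$. The compact support assumption and $\eta_t=\id$ near $t\in\{0,1\}$ imply $f_t\equiv 1$ outside a compact set and near $t=0,1$. Setting $g_t(x):=\alpha_{\eta_t(x)}(\partial_t\eta_t(x))$ (the contact Hamiltonian of the generating vector field, evaluated along the loop), a direct chain-rule calculation shows
\[
\wt\eta^{*}(e^t\alpha) \;=\; e^{t}\bigl(f_t\,\alpha+g_t\,dt\bigr).
\]

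\textbf{Step 2 (the target and $k$).} I would take
\[
\wt\eta_1(x,t):=\bigl(\eta_t(x),\,t-\log f_t(x)\bigr),
\]
and compute
\[
(\wt\eta_1)^{*}(e^{t}\alpha)
\;=\;e^{t-\log f_t}\bigl(f_t\,\alpha+g_t\,dt\bigr)
\;=\;e^{t}\bigl(\alpha+(g_t/f_t)\,dt\bigr).
\]
Thus $k(x,t):=g_t(x)/f_t(x)$, extended by zero outside $\supp(\eta_t)$ (where $f_t\equiv1$ and $g_t\equiv0$), is the required compactly supported function. Both $\wt\eta$ and $\wt\eta_1$ are the identity off the same compact set.

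\textbf{Step 3 (the isotopy).} The natural candidate is the linear interpolation $\wt\eta_{s}(x,t):=(\eta_t(x),\,t-s\log f_t(x))$. A block-determinant computation yields
\[
\det(d\wt\eta_{s})\;=\;\det(d\eta_t)\,\bigl[(1-s\,\partial_t\log f_t)+s\,\langle d_{M}\log f_t,\,(d\eta_t)^{-1}\partial_t\eta_t\rangle\bigr],
\]
so if $|\partial_t\log f_t|$ is small everywhere this is a family of diffeomorphisms and we are done. In general the bracket can degenerate. To fix this, I pre-isotope $\wt\eta$ to a reparametrized suspension $\wt\eta'(x,t):=(\eta_{r(t)}(x),t)$ where $r:\R\to[0,1]$ is a smooth monotone reparametrization, supported on a large interval $[-M,M+1]$ and having $r'$ so small on $\{r(t)\in\supp(f_t-1)\}$ that $|\partial_t\log f_{r(t)}|<1/2$ uniformly. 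The preliminary isotopy is obtained by smoothly interpolating between $\id_{[0,1]}$ and $r$ through diffeomorphisms of the relevant intervals, yielding a compactly supported isotopy of suspensions (all supports lying inside the common compact $K_M\times[-M,M+1]$). After this reparametrization the bracket above stays $>1/4$ throughout $s\in[0,1]$, so the linear interpolation for $\wt\eta'$ is a compactly supported isotopy of diffeomorphisms ending at the analogous $\wt\eta'_1$, whose pullback of $e^{t}\alpha$ has the required form with $k=g'_t/f'_t$. Concatenating the two isotopies gives the desired $\{\wt\eta_s\}_{s\in[0,1]}$.

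\textbf{Main obstacle.} The sole nontrivial point is ensuring that each $\wt\eta_s$ remains a diffeomorphism of $M\times\R$: the Jacobian of the natural linear interpolation can degenerate when the conformal factor $f_t$ varies rapidly in $t$. The reparametrization step — stretching the loop over a long interval in $\R$ so that $|\partial_t\log f_t|$ becomes small — circumvents this and is the key technical ingredient.
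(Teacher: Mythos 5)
Your proposal is correct and takes essentially the same route as the paper: the paper also computes $\wt\eta^{*}(e^t\alpha)$, corrects by the shift $t\mapsto t-f_t(x)$ (writing $\eta_t^{*}\alpha=e^{f_t}\alpha$), handles invertibility by the same stretching idea (the rescaling $\rho(x,t)=(\eta_{\epsilon t}(x),t)$ playing the role of your reparametrization $r$), and ends with the same linear interpolation $t\mapsto t-s f_{\epsilon t}(x)$. Two small points to tighten, neither affecting the argument: your smallness condition must control \emph{both} terms of the Jacobian bracket (the cross term also carries a factor $r'(t)$ through $\partial_t\eta_{r(t)}$, so shrinking $r'$ handles it), and passing from a nonvanishing Jacobian to a global diffeomorphism needs one more word — either properness plus degree one for a compactly supported local diffeomorphism, or the paper's direct injectivity check via monotonicity of $t\mapsto t-sF_p(t)$ when $\|DF_p\|<1$.
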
 

\begin{proof}
For each $s\in[0,1]$, $\eta_s$ is a compactly supported contactomorphism and thus there exist compactly supported functions $f_s: M \longrightarrow \R$ such that
\[
\eta_s^\ast (\alpha) = e^{f_s (x)} \alpha. \] 
By definition of $\wt\eta$, the pull--back of the Liouville form $e^t\alpha$ reads
\[
\widetilde{\eta}^\ast \left(   e^t \alpha \right) = e^t \left( e^{f_t(x)} \alpha + g(x,t) dt \right)
\]
where $g: M \times \R \longrightarrow \R$ is a compactly supported smooth function, since $\wt\eta$ is the identity away from a compact set. In order to correct the term introduced by the conformal factors $\{f_t\}$, consider the smooth map
\[
\check{\eta}(x,t) = (\eta_t (x), t-f_t(x) ).
\]
By construction, 
\[
\check{\eta}^\ast \left( e^t \alpha   \right) = e^{t- f_t(x)} \left( e^{ f_t(x)} \alpha + g_1(x,t) dt \right) 
= e^t \alpha + g_2(x,t) dt 
\]
where $g_1,g_2:M\times\R\longrightarrow\R$ are compactly supported smooth functions, for the conformal factors $\{f_t\}$ and $\eta_t$ respectively vanishing and equal the identity away from a compact set. The smooth map $\check{\eta}$ satisfies the Equation \ref{eq:exactcontacto} in the statement as long as $\check{\eta}$ is indeed a diffeomorphism. Surjectivity follows from the fact that each $\eta_t$ is a diffeomorphism, and for any $p  \in M$, the function 
\[
t - f_t (\eta^{-1}_t (p) ) 
\]
is continuous, and agrees with $t$ outside a compact set. It remains to ensure injectivity.
 
Injectivity for $\check{\eta}$ means that there do not exist pairs $(x,t),(y,l) \in M\times\R$ such that 
\[
\eta_t (x) = \eta_l (y) \quad \text{and} \quad  t - f_t(x) = l - f_l (y). 
\]
Equivalently, at no point $p \in M$ do there exist two levels $t, l \in \R$ such that
\begin{equation}\label{eq:intermediate-value}
t - f_t (\eta_t^{-1} (p)) = l - f_l ( \eta_l^{-1} (p)).
\end{equation}
In order to prove this, consider for each point $p \in M$, the smooth function
$$F_p: \R\longrightarrow\R,\quad F_p(t) = f_t (\eta_t^{-1} (p)).$$
By the intermediate value theorem, the equality (\ref{eq:intermediate-value}) above implies that $\check{\eta}$ will be injective if $\|DF_p\| < 1$ for all $p \in M$; note that a priori, we only know that the derivatives $\|DF_p\|$ are bounded. To complete the proof, we use a rescaling trick. 

Fix some small $\epsilon>0$ and define $\rho \in \Diff^c(M \times \R)$ by 
\[
\rho(x,t) = (\eta_{\epsilon t} (x), t).
\]
By construction, 
\[
{\rho}^\ast \left(   e^t \alpha \right) = e^t \left( e^{f_{\epsilon t} (x)} \alpha + \zeta(x,t) dt \right)
\]
for some smooth function $\zeta:M\times\R\longrightarrow\R$ and it suffices to show that the function
\[
\check{\rho}(x,t) := (\eta_{\epsilon t} (x), t-f_{\epsilon t}(x) )
\]
is injective. The analogue of equation (\ref{eq:intermediate-value}) is now
\begin{equation*}
t - f_{\epsilon t} (\eta_{\epsilon t}^{-1} (p)) = l - f_{\epsilon l} ( \eta_{\epsilon l}^{-1} (p)).
\end{equation*}
and the analogue of the function $F_p$ is
\[
G_p (t) := f_{\epsilon t} (\eta^{-1}_{\epsilon t} (p) ) = F_p (\epsilon t).
\]
To ensure injectivity, it suffices to have $\|DG_p\| = \epsilon \|DF_p\| <1$ for all $p \in M$, which can be achieved so long as $\epsilon>0$ is sufficiently small. Suppose we have chosen such an epsilon.

Finally, we need to check that $\check{\rho}$ is isotopic to $\tilde{\eta}$ through compactly supported diffeomorphisms. Note that $\tilde{\eta}$ is isotopic to $\rho$ through compactly supported diffeomorphisms, and we can also consider the linear interpolation
\[
\check{\rho}_l (x,t) = (\eta_{\e t}(x), t - l \cdot f_{\e t}(x)) \quad l \in [0,1]
\]
between the diffeomorphisms $\rho$ and $\check{\rho}$. As before, to show that each $(\check{\rho}_l)_{l\in[0,1]}$ is a diffeomorphism, it suffices to check injectivity. Proceeding as before we get the condition $\|l DG_p \| < 1$ for all $p \in M$, which holds for $l \in [0,1]$. 
\end{proof}

\begin{proof}[Proof of Proposition \ref{Lem:Moser}]
Let us start with the map $\psi=\wt\eta_1$ given to us by Lemma \ref{Lem:exact-contactos}; we will post-compose it with a compactly supported Moser isotopy in order to obtain a compactly supported symplectomorphism of $(M \times \R,d(e^t\alpha))$. First, non-degeneracy of the symplectic 2--form $\omega=d(e^t\alpha)$ gives the pointwise inequality
\begin{equation}\label{eq:non-degenerate1}
\left( d \left( e^t \alpha \right) \right) ^{\wedge n} > 0.
\end{equation}
Consider the pullback of $\omega$ by the diffeomorphism $\psi$
\[
\psi^\ast \left( d \left( e^t \alpha \right) \right) = d \left( e^t \alpha \right) + d(k(x,t)) \wedge dt
\]
where $k:M\times\R\longrightarrow\R$ is a compactly supported smooth function. This pull--back form is a symplectic structure on $M \times \R$, so we also have the pointwise inequality
\begin{equation}\label{eq:non-degenerate2}
\psi^\ast \left( d \left( e^t \alpha \right) \right) ^{\wedge n}  = 
\left( d \left( e^t \alpha \right) \right)^{\wedge n}  
+ C   \left( d \left( e^t \alpha \right) \right)^{\wedge (n-1)} \wedge dk \wedge dt \quad  > 0 
\end{equation}
for some binomial coefficient $C$. Now consider the linear interpolation between these two symplectic forms:
\[
\omega_l := (1-l) \omega + l\psi^*\omega =  d \left( e^t \alpha \right) + l d(k(x,t)) \wedge dt,\quad l\in[0,1].\]
These are closed 2--forms by linearity of the differential, and we also have
\[
(\omega_l)^{\wedge n} = 
 \left( d \left( e^t \alpha \right) \right)^{\wedge n}  
+ l \cdot C   \left( d \left( e^t \alpha \right) \right)^{\wedge n-1} \wedge dk \wedge dt 
\]
which, by equations (\ref{eq:non-degenerate1}) and (\ref{eq:non-degenerate2}), is strictly positive at every point. This implies that each of the 2--forms $\omega_l$ is a symplectic structure, and further they are all exact and agree with $\psi^*\omega$ outside a compact subset of $M \times \R$. Applying the Moser isotopy theorem to this family of symplectic forms $\omega_l$ provides the symplectomorphism $\phi$, as required.
\end{proof}

Proposition \ref{Lem:Moser} constructs the contact--symplectic Gromoll map
$$\la_{1,1}^c:\pi_1\Cont(M,\partial;\ker\a)\longrightarrow\pi_0\Symp(M\times\R,\partial;d(e^t\alpha)),\quad \la_{1,1}^c(\eta)=\phi$$
We now proceed to establish the symplectic--contact counterpart.


\subsection{Suspending a loop of symplectomorphisms}\label{ssec:suspendloop-symp}
Let $(M^{2n}, d\theta)$ be an exact symplectic manifold and denote by
\[
\Symp^c (M, \partial;\theta)
\]
the group of symplectomorphisms $\psi:(M,d\theta)\longrightarrow (M,d\theta)$ such that
\begin{itemize}
\item[-] $\psi$ has compact support and in the interior of $M$, 
\item[-] $\psi$ is an exact symplectomorphism: $\psi^\ast (\theta) = \theta + df$, some smooth function $f: M \longrightarrow \R$ with compact support in $\Int(M)$. 
\end{itemize}

Let $[\{ \phi_s \} ] \in \pi_1 (\Symp^c (M, \partial; \omega, \theta))$ be a path of such exact symplectomorphisms, represented by a one--parameter family of maps $(\phi_s)_{s \in [0,1]}$ which satisfies
\begin{itemize}
\item[-] $\phi_s = \id$ for $s \in \Op (\{0\} \cup \{ 1\} )$;
\item[-] $\phi_s^\ast \theta = \theta + d f_s$, for a smooth family $f_s: M\longrightarrow\R$ with compact support inside $\Int(M)$. 
\end{itemize}
Now consider the contact manifold $(M \times \R, \ker(\theta - dz))$, where $z$ is the coordinate on $\R$. The class $[ \{ \phi_s \} ]$ induces the isotopy class of diffeomorphisms
$$[\wt\phi ] \in \pi_0 \Diff^c (M\times \R),\qquad \wt\phi (x, z) = (\phi_z(x), z)$$
where we have extended the family $\phi_z$ by the identity in the natural manner. 

In order to define the symplectic--contact Gromoll map
$$\la_{1,1}^s:\pi_1\Symp(M,\partial;\theta)\longrightarrow\pi_0\Cont(M\times\R,\partial;\ker(\theta-dz)),$$
we now prove the following proposition.

\begin{prop}\label{Lem:Gray}
There is a contactomorphism $\eta \in \Cont^c (M \times \R, \partial; \ker( \theta- dz) )$ smoothly isotopic to $\wt\phi$ through compactly supported diffeomorphisms of $M \times \R$.
\end{prop}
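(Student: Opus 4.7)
The approach I would take is to mirror the strategy of Proposition \ref{Lem:Moser}: first produce a natural candidate lift of $\wt\phi$ and compute its pullback of $\theta - dz$, then rescale the loop so that the error term is small, and finally apply Gray stability.

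\textbf{Step 1 (candidate lift).} A single exact symplectomorphism $\phi$ with $\phi^*\theta = \theta + df$ lifts canonically to a strict contactomorphism $(x, z) \mapsto (\phi(x), z + f(x))$ of the contactization. Mimicking this $z$-pointwise, I would define
$$\psi(x, z) := (\phi_z(x), z + f_z(x)).$$
Through the linear interpolation $\psi_l(x, z) := (\phi_z(x), z + l f_z(x))$, $l \in [0, 1]$, each $\psi_l$ is a compactly supported diffeomorphism, so $\psi$ is smoothly isotopic to $\wt\phi$ through compactly supported diffeomorphisms.

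\textbf{Step 2 (pullback computation).} Let $X_s$ be the generating vector field of $\phi_s$ and $H_s$ a compactly supported Hamiltonian with $X_s \ip d\theta = dH_s$. Differentiating the relation $\phi_s^*\theta = \theta + df_s$ in $s$ gives the identity $\partial_s f_s = \phi_s^*(H_s) + \phi_s^*(\theta(X_s))$. Combining this with a direct chain-rule computation of $\wt\phi^*\theta$ yields
$$\psi^*(\theta - dz) \;=\; (\theta - dz) + \kappa(x, z)\, dz, \qquad \kappa := -\phi_z^* H_z,$$
with $\kappa$ compactly supported.

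\textbf{Step 3 (rescaling and contact interpolation).} The key trick, analogous to the rescaling in Lemma \ref{Lem:exact-contactos}, is to reparameterize the loop: replace $\{\phi_s\}_{s\in\R}$ by $\{\phi_{\sigma(s)}\}$ for a monotone $\sigma \colon \R \to \R$ with $\sigma(s) = 0$ for $s \ll 0$ and $\sigma(s) = 1$ for $s \gg 0$, chosen so that $\|\sigma'\|_{C^0}$ is arbitrarily small (paid for by enlarging the support in the $z$-direction, but remaining compact). This rescales both $X$ and $H$ by $\sigma'$, hence shrinks $\kappa$ to arbitrarily small $C^1$ norm, while the smooth isotopy class of the suspension $\wt\phi$ is unchanged. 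Then I would consider the family
$$\alpha_t := (\theta - dz) + t \kappa\, dz, \qquad t \in [0, 1].$$
The expansion
$$\alpha_t \wedge (d\alpha_t)^n \;=\; (t\kappa - 1)\, dz \wedge (d\theta)^n \;+\; nt\, \theta \wedge (d\theta)^{n-1} \wedge d\kappa \wedge dz$$
shows that for $\|\kappa\|_{C^1}$ small enough each $\alpha_t$ is a contact form, agreeing with $\alpha_0 = \theta - dz$ outside a compact set.

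\textbf{Step 4 (Gray stability).} Applying Gray stability to the family $\alpha_t$ of compactly supported contact perturbations produces a compactly supported isotopy $\chi_t$ of $M \times \R$ with $\chi_0 = \id$ and $\chi_t^* \alpha_t = \lambda_t \alpha_0$ for some positive functions $\lambda_t$; compact support of $\chi_t$ is automatic from compact support of $\alpha_t - \alpha_0$ in the Moser-type ODE that defines the Gray vector field. Setting $\eta := \psi \circ \chi_1$ gives $\eta^*(\theta - dz) = \lambda_1 (\theta - dz)$, so $\eta$ is a contactomorphism, and concatenating $\{\psi \circ \chi_t\}_{t \in [0,1]}$ with the isotopy from Step 1 exhibits $\eta$ as smoothly isotopic to $\wt\phi$ through compactly supported diffeomorphisms, as required.

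\textbf{Main obstacle.} As in Proposition \ref{Lem:Moser}, the delicate point is Step 3: verifying that after rescaling the linear path $\alpha_t$ consists of contact forms. One must control the $C^1$-norm of $\kappa$ (not just its $C^0$-norm), since the second term in the expansion involves $d\kappa$; confirming that the single rescaling by $\sigma'$ tames both $\kappa$ and $d_M\kappa$ simultaneously, while respecting compact support and the mapping class of $\wt\phi$, is the essential technical content of the proof.
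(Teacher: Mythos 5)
Your overall strategy is sound and genuinely different in one respect from the paper's: the paper never shears the suspension, but instead works with $\wt\phi(x,z)=(\phi_z(x),z)$ itself, rescales it to $(x,z)\mapsto(\phi_{\e z}(x),z)$, and applies Gray stability to the straight-line family $\lambda_s=\theta+s\left(d_x f_{\e z}+\e g(x,\e z)\,dz\right)-dz$, observing that every error term in $(d\lambda_s)^n\wedge\lambda_s-(d\lambda_0)^n\wedge\lambda_0$ carries a factor of $\e$. You instead first shear in the $z$-direction by $f_z$, which is the analogue in this Gromoll direction of the map $\check\eta$ from Lemma \ref{Lem:exact-contactos}; your Step 2 computation is correct, and it buys you a cleaner pullback $(\theta-dz)+\kappa\,dz$ with $\kappa=-\phi_z^*H_z$. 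Your flagged worry in the ``main obstacle'' is also milder than you fear: since $d\kappa\wedge dz=d_x\kappa\wedge dz$, only $d_x\kappa$ (not $\partial_z\kappa$) enters the contact condition, and reparameterizing by $\sigma$ scales both $\kappa$ and $d_x\kappa$ by $\sigma'$, so Steps 3 and 4 go through, including the compact support of the Gray isotopy.

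The genuine gap is in Step 1: the claim that each $\psi_l(x,z)=(\phi_z(x),z+l f_z(x))$ is a diffeomorphism is not automatic, and this is exactly the difficulty that the paper's Lemma \ref{Lem:exact-contactos} has to confront for $\check\eta$. Injectivity of $\psi_l$ amounts to injectivity of $z\mapsto z+l\,f_z(\phi_z^{-1}(p))$ for every $p$, which can fail if $\partial_z\bigl(f_z(\phi_z^{-1}(p))\bigr)$ ever reaches $-1$; a priori this derivative is only bounded, which is precisely why the paper needs the $\e$-rescaling and the $\|DF_p\|<1$ intermediate-value argument there. As written, you introduce the reparameterization only in Step 3, after $\psi$ and its isotopy to $\wt\phi$ have already been used, so the object your Step 2 is applied to may not even be a diffeomorphism. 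The repair is easy but must be made explicit: perform the reparameterization $\{\phi_s\}\rightsquigarrow\{\phi_{\sigma(s)}\}$ (hence $f_s\rightsquigarrow f_{\sigma(s)}$, $H_s\rightsquigarrow\sigma'(s)H_{\sigma(s)}$) \emph{before} Step 1, with $\|\sigma'\|_{C^0}$ small; then $\bigl|\partial_z\bigl(l\,f_{\sigma(z)}(\phi_{\sigma(z)}^{-1}(p))\bigr)\bigr|<1$ uniformly, so each $z\mapsto z+l f_{\sigma(z)}(\phi_{\sigma(z)}^{-1}(p))$ is strictly increasing, proper, and equal to $z$ outside a compact set, giving that every $\psi_l$ is a compactly supported diffeomorphism, while the reparameterized suspension remains compactly-supported isotopic to $\wt\phi$. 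With that reordering your argument is complete.
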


\begin{proof} First, note that the pull--back of the contact form can be written as 
\[
\phi^\ast (\theta - dz) = \theta + d_x (f_z(x)) + g(x,z)dz - dz
\]
for some smooth function $g: M \times \R\longrightarrow \R$, which is supported in the union of the sets $\supp(\phi_z) \times [0,1]$ for $z\in[0,1]$. Now, let us fix a small constant $\e\in\R^+$ and consider the map
\[
\psi (x,z) := (\phi_{\e z} (x), z).
\]
The maps $\phi$ and $\psi$ are certainly isotopic through compactly supported diffeomorphisms fixing an open neighborhood $\Op (\partial (M \times \R))$. 
Let $e\in\Diff(M\times\R)$ be the diffeomorphism $e(x,z) := (x, \e z)$, which we can use to write $\psi = e^{-1} \circ \phi \circ e$, and thus the chain rule implies
\[
\psi^\ast (\theta - dz) = \theta + d_x (f_{\e z} (x)) + \e g (x, \e z) dz - dz.
\]
Consider the family of one-forms
\[
 \lambda_s := \theta + s \cdot \left( d_x (f_{\e z} (x)) + \e g (x, \e z) dz\right) - dz , \quad s \in [0,1]. 
\]
By construction, $\lambda_0 = \theta-dz$ and $\lambda_1 = \psi^\ast (\theta - dz)$, and we claim that the 1--forms $\lambda_s$ are contact for all $s\in[0,1]$ provided that $\e$ is suitably small.

Indeed, let $f, \, f_\e: M \times \R \longrightarrow \R$ be given by $f(x,z) = f_z (x)$, and $f_e (x,z) = f_{\e z} (x)$, and let $g_\e (x,z) = g (x, \e z)$. Now, for a fixed choice of metric, each of the terms in
\begin{equation} \label{eq:gray}
(d \lambda_0)^n \wedge \lambda_0 - (d \lambda_s)^n \wedge \lambda_s 
\end{equation}
is bounded above in absolute value by a product of binomial coefficients, multiples of $s$, and at least one multiple of one of the following terms:
\begin{eqnarray*}
 \|d_z d_x f_\e\| & = & \e \| d_z d_x f \| \\
 \|\e g_\e\| & = & \e \|g\|  \\
\| d_x ( \e g_\e )\| & =  & \e \|d_x g \|
\end{eqnarray*}
In consequence, for sufficiently small $\e$, the two 1--forms $(d \lambda_0)^n \wedge \lambda_0$ and $(d \lambda_s)^n \wedge \lambda_s$ are of the same non-zero sign at each point, and thus $\lambda_s$ is a contact form for every $s\in[0,1]$. Then, by applying the Gray stability theorem to the family of contact structures $\{ \ker \lambda_s \}_{s \in [0,1]}$ we obtained the desired isotopy and the contactomorphism $\eta$ in the statement. 
\end{proof}

\subsection{Symplectic and contact Gromoll filtration}
By applying Propositions \ref{Lem:Moser} and \ref{Lem:Gray} to $D^k$--parametric families of maps, we have proven the following:

\begin{prop}\label{prop:symplectic_Gromoll}
Let $(M,\theta)$ be an exact symplectic manifold, $(N,\ker\a)$ a contact manifold and $k\in\N$. Then the smooth Gromoll filtration can be refined as follows:

\begin{itemize}
\item[1.] There exists a symplectic--contact Gromoll map
$$\lambda^s_{k,1}: \pi_k \Symp^c (M, \partial; \omega, \theta)\longrightarrow\pi_{k-1} \Cont^c (M \times \R, \partial; \ker (\theta - d z))$$
such that the following diagram commutes:
$$
\xymatrix{
\pi_k \Symp^c (M, \partial; \omega, \theta) \ar[r]^-{\lambda^s_{k,1}} \ar[d] & \pi_{k-1} \Cont^c (M \times \R, \partial; \ker (\theta - d z)) \ar[d]
\\
\pi_k \Diff^c (M, \partial) \ar[r]^-{\lambda_{k,1}} &  \pi_{k-1} \Diff^c (M \times \R, \partial)
}
$$
where the vertical maps are induced by the natural inclusions. \newline

\item[2.] There exists a contact--symplectic Gromoll map
$$\la_{1,1}^c:\pi_1\Cont^c(M,\partial;\ker\a)\longrightarrow\pi_0\Symp^c(M\times\R,\partial;e^t\alpha)$$
such that the following diagram commutes:
\[
\xymatrix{
\pi_k \Cont^c (N, \partial; \ker (\alpha)) \ar[r]^-{\lambda^c_{k,1}} \ar[d] & \pi_{k-1} \Symp^c (N \times \R, \partial; e^t \alpha) \ar[d]
\\
\pi_k \Diff^c (N, \partial) \ar[r]^-{\lambda_{k,1}} &  \pi_{k-1} \Diff^c (N \times \R, \partial)
}
\]
where the vertical maps are induced by the natural inclusions.\hfill$\Box$
\end{itemize}
\end{prop}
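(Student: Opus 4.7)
The strategy, signaled in the statement, is to apply Propositions \ref{Lem:Moser} and \ref{Lem:Gray} to $D^k$--parametric families. For Part 2, represent an element of $\pi_k\Cont^c(N,\partial;\ker\alpha)$ by a smooth map $(D^k,\partial D^k) \to (\Cont^c(N,\partial;\ker\alpha),\id)$; decomposing $D^k = D^{k-1} \times [0,1]$ with coordinates $(v,s)$, this is equivalently a smooth $D^{k-1}$--family of loops $\{\eta^v_s\}_{s \in [0,1]}$ of compactly supported contactomorphisms, each equal to the constant identity loop when $v\in \partial D^{k-1}$. After a standard reparametrization in the $s$ and $v$ variables, one may assume $\eta^v_s = \id$ on a neighborhood of $\partial (D^{k-1}\times [0,1])$, so that Proposition \ref{Lem:Moser} applies for each fixed $v$.

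Applying Proposition \ref{Lem:Moser} for every $v \in D^{k-1}$ produces a compactly supported symplectomorphism $\phi_v$ of $(N \times \R, d(e^t \alpha))$. The core of the proof is to verify that these assemble into a continuous family. Tracing through Lemma \ref{Lem:exact-contactos} and Proposition \ref{Lem:Moser}: the conformal factors $f^v_s$ defined by $(\eta^v_s)^*\alpha = e^{f^v_s}\alpha$ form a smooth family in $(v,s)$; the rescaling constant $\e > 0$ that ensures injectivity of the map $\check\rho$ may be chosen uniformly, since $D^{k-1}$ is compact and the relevant derivatives are bounded on it; and the Moser isotopy theorem admits a parametric version, which applied to the family of symplectic forms $\omega^v_l$ yields a continuous family $\phi_v$. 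Since the input is the constant identity loop when $v \in \partial D^{k-1}$, every step of the construction returns the identity there, so $[\phi_\bullet] \in \pi_{k-1}\Symp^c(N \times \R, \partial; d(e^t\alpha))$ is well defined, and we set $\la^c_{k,1}([\eta]) := [\phi_\bullet]$.

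Part 1 is proved analogously, using Proposition \ref{Lem:Gray} and parametric Gray stability in place of parametric Moser stability, again with a uniform choice of small $\e$ over the compact parameter space $D^{k-1}$. Commutativity of the two diagrams follows immediately from the construction: each $\phi_v$ (respectively each $\eta_v$) is smoothly isotopic through compactly supported diffeomorphisms to the naive suspension $\wt\eta^v(x,t) = (\eta^v_t(x),t)$ (respectively $\wt\phi^v(x,z) = (\phi^v_z(x),z)$), since the intermediate Moser and Gray stability isotopies are themselves paths in $\Diff^c$. Hence the induced classes in $\pi_{k-1}\Diff^c$ agree with the output of the smooth Gromoll map $\la_{k,1}$, as required.

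The principal technical obstacle is the parametric upgrade itself: assembling the pointwise-in-$v$ constructions into a genuine continuous $D^{k-1}$--family. This reduces to checking that each ingredient in Propositions \ref{Lem:Moser} and \ref{Lem:Gray}---the conformal correction, the rescaling constant $\e$, and the Moser or Gray isotopy---depends continuously on $v$, which in turn follows from smoothness of the input data in $v$ together with compactness of $D^{k-1}$ yielding uniform bounds on the finitely many derivative quantities appearing in the single-loop arguments.
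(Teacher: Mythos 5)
Your proposal is correct and follows the paper's own route: the paper proves this proposition precisely by applying Propositions \ref{Lem:Moser} and \ref{Lem:Gray} to $D^k$--parametric families, and your write-up supplies the expected details (uniform choice of $\e$ over the compact parameter space, parametric Moser/Gray stability, and the rel-boundary isotopy to the naive suspension giving commutativity).
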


Composing the contact and symplectic Gromoll maps alternately, one obtains: 
\begin{itemize}
\item[(a)] For an odd number $2l+1\in\N$,
 $$\lambda_{k,2l+1}^{c}:\pi_k\Cont^c(N,\partial;\ker\a)\longrightarrow\pi_{k-2l-1}\Symp^c(M\times\R^{2l+1},\partial;\theta(\a)),$$

where $\theta(\a)$ denotes the Liouville stabilization of the contact form $\a$, and 
 $$\lambda_{k,2l+1}^{s}:\pi_k\Symp^c(M,\partial;\theta)\longrightarrow\pi_{k-2l-1}\Cont^c(N\times\R^{2l+1},\partial;\a(\theta)),$$

where $\a(\theta)$ denotes the contact stabilization of the Liouville form $\theta$.\\

\item[(b)] For an even number $2l\in\N$,
 $$\lambda_{k,2l}^{c}:\pi_k\Cont^c(N,\partial;\ker\a)\longrightarrow\pi_{k-2l}\Cont^c(N\times\R^{2l},\partial;\wt\a),$$

 where $\wt\a$ denotes the contact stabilization of the contact form $\a$, and 
 $$\lambda_{k,2l}^{s}:\pi_k\Symp^c(M,\partial;\theta)\longrightarrow\pi_{k-2l}\Symp^c(M\times\R^{2l},\partial;\wt\theta),$$

where $\wt\theta$ denotes the Liouville stabilization of the Liouville form $\theta$.\\
\end{itemize}

\begin{remark}
Given a loop of contactomorphisms $\{ \eta_t \}$, the scaling argument in Proposition \ref{Lem:Moser} suggests the following question: is the minimun length in $\R$ of the image of the support of a symplectic representative of $\tilde{\eta}$ an interesting invariant? The methods of the proof yield a naive such length of at most $\max_{ (p,t) \in M\times\R} \left( -DF_p|_t \right)$ for each path, and zero in the case of a loop of strict contactomorphisms. 

By analysing the terms of Equation \ref{eq:gray} in the proof of Proposition \ref{Lem:Gray} more carefully, one gets analogous bounds involving the correction functions $f_t$, where $\phi^\ast_t \theta = \theta + df_t$. In more generality, one could ask about the minimal {\it volume} that can be achieved by representatives of a class in the groups $\pi_k \Symp$ and $\pi_k \Cont$.\hfill$\Box$
\end{remark}


\section{Proof of Theorem \ref{thm:main}}\label{sec:proof}  Let us give the geometric construction underlying the proof of Theorem \ref{thm:main} in a nutshell.

We start with an almost complex diffeomorphism of  $\R^{4n}$ representing the smooth mapping class of the Kervaire sphere, which by Proposition \ref{prop:milnormunkres2} can be assumed to preserve the distance to the origin and act as the identity in a neighborhood of the origin and infinity. Moreover, the associated  loop of diffeomorphisms of the spheres $S^{4n-1}$ is realised by a loop of almost-contact diffeomorphisms. We next show there is an overtwisted contact structure on the sphere $S^{4n-1}$ such that this loop of almost-contact diffeomorphisms is realised by a loop of contactomorphisms. We then 
upgrade this loop of contactomorphisms to a symplectomorphism of the symplectization using Proposition \ref{Lem:Moser}.
 
\begin{remark}
The resulting symplectic structure is non--standard but,  as we shall further discuss in Section \ref{sec:app}, it has appeared in the symplectic topology literature before.\hfill$\Box$
\end{remark}

\subsection{Loop of contactomorphisms}\label{ssec:loop}

Let us focus on the first step. Consider the almost contact structure $(S^{4n-1},J_\st)$ induced by the restriction $J_\st|_{S^{4n-1}}$ of the standard almost complex structure on $S^{4n-1}\times[0.1,0.9]\sse D^{4n}$. By Proposition \ref{prop:milnormunkres2}, there exists an almost complex diffeomorphism $\mu\in\Diff(D^{4n},\partial;J_\st)$ such that
\begin{itemize}
\item[a.] $[\mu] \in\pi_0\Diff(D^{4n},\partial)$ is the clutching map for the Kervaire sphere.
\item[b.] $\mu(S^{4n-1}\times\{ t \})=S^{4n-1}\times\{ t \},\forall t \in(0,1).$
\item[c.] $\mu|_{\Op(\{0\})}=\id$ and $\mu_t:=\mu|_{S^{4n-1}\times\{ t \}}$ is compactly supported away from the disks
$$\Delta\times\{ t \}\sse S^{4n-1}\times\{ t \},$$
where $\Delta\cong D^{4n-1} \sse S^{4n-1}$ is a fixed small disk independent of $t \in(0,1)$.
\end{itemize}

Moreover, by Property II in Proposition \ref{prop:milnormunkres2} each $\mu_t$ is an almost contactomorphism; more precisely, there exists a smooth $(s,t)$--parametric family of almost-contact structures $\xi'_{t,s}$ satisfying
\[
\xi'_{t,0} = \xi_{\st}; \quad \xi'_{t,1} = (\mu_t)_\ast \xi_{\st};
\quad \xi'_{t,s} = \xi_{\st} \text{ for all } t \in \Op(\{0\} \cup \{1 \}).
\]
The maps $\mu_t$ belong to the compactly supported subgroup $\Diff(D^{4n-1},\partial;J_\st)\sse\Diff(S^{4n-1};J_\st)$ by the above properties, where $D^{4n-1} = S^{4n-1} \backslash \Delta$, and satisfy $\mu_t=\id$ for $t=(0,0.1]\cup[0.9,1)$. Examining Property II in Proposition \ref{prop:milnormunkres2}, we see that for all $t$ and $s$, 
\[
\xi'_{t,s}|_{\Delta} = \xi_{\st}|_{\Delta }.
\]

Thus the the maps $\{ \mu_t \}$ together with the data of the family $\xi'_{t,s}$ define a homotopy class $[\mu_t]\in\pi_1\Diff(D^{4n-1},\partial;J_\st)$ of loops of almost contact maps.

Now consider a slightly larger disc embedding $D^{4n-1} \subset S^{4n-1}$, where we now assume we picked an embedding and a metric such that $D^{4n-1}$ has radius one, and 
\[
\cup_{t \in[0,1]}\supp(\mu_t) \subset D^{4n-1}(0.9) \quad \text{and} \quad
D^{4n-1} \setminus D^{4n-1}(0.9) \subset \Delta.
\]
Equip $D^{4n-1}$ with the unique overtwisted contact structure $\xi_\ot$ which is standard on the neighbourhood $\Op(\partial D^{4n-1})$ and lies in the same almost contact class as the structure induced by $J_\st$. In addition, choose the contact structure such that the shell $D^{4n-1}(0.95)\setminus D^{4n-1}(0.9)$
contains an overtwisted disc. In this case, the loop of contact structures $(\mu_t)_*(\xi_\ot)$ consists of overtwisted contact structures sharing a fixed embedded overtwisted disk in the shell region
$D^{4n-1}(0.95)\setminus D^{4n-1}(0.9)$
since the almost contactomorphisms $\mu_t$ are supported away from the overtwisted disc. 
Inserting overtwisted discs in $D^{4n-1}(0.95)\setminus D^{4n-1}(0.9)$, the two-parameter family of almost-contact structures $\xi'_{t,s}$ can be modified to a family $\xi''_{t,s}$ such that:
\[
\xi''_{t,0} = \xi_{\ot}; \quad \xi''_{t,1} = (\mu_t)_\ast \xi_{\ot};
\quad \xi''_{s,t} = \xi_{\ot}\, \, \forall t \in \Op(\{0\} \cup \{1 \}); 
\quad \xi''_{t,s}|_{\Delta \cap D^{4n-1}} = \xi_{\ot}|_{\Delta \cap D^{4n-1}}.
\]

By \cite[Theorem 1.2]{BEM}, applied relative to a fixed neighbourhood $\Op(\partial D^{4n-1})$, there exists a smooth, two-parameter family of contact structures $\{\xi_{t,s}\}_{s\in[0,1]}$ such that
for all $t$,
$$\xi_{t,0}=\xi_\ot; \quad \xi_{t,1}=(\mu_t)_*(\xi_\ot); \quad \xi_{s,t} = \xi_{\ot} \, \, \forall t \in \Op(\{0\} \cup \{1 \}).$$

Note that in general the homotopy must be non--trivial in a neighbourhood of the overtwisted disk and thus in the region $D^{4n-1}(0.95)\setminus D^{4n-1}(0.9)$, but it will be constant on a neighbourhood of the boundary: that is, for all $t$ and $s$ we have
$$
\xi_{t,s}|_{\Op (\partial D^{4n-1})} = \xi_{\ot}|_{\Op (\partial D^{4n-1})} = \xi_{\st}|_{\Op (\partial D^{4n-1})}.$$

For each fixed $t\in[0,1]$, the isotopy of contact structures produces, by using Gray's stability theorem, a path of compactly supported diffeomorphisms $\{g_{t,s}\}_{s\in[0,1]}$ of $D^{4n-1}$ such that
$$(g_{t,s})_*\xi_{t,s}=\xi_\ot,\quad  g_{t,s}|_{\Op(\partial D^{4n-1})}=\id,\quad \forall (t,s) \in [0,1]^2,\mbox{ and }g_{t,s} = \id \quad \forall t \in \Op(\{0\} \cup \{ 1\} )$$
In particular, we obtain the two equalities
$$
g_{t,0}=\id,\quad (g_{t,1}\circ\mu_t)_*\xi_\ot=\xi_\ot,\quad \forall t\in[0,1],
$$
and thence $G_t=\{g_{t,1}\circ\mu_t\}_{t\in[0,1]}$ defines a path of compactly supported contactomorphisms for the contact structure $(D^{4n-1},\partial;\xi_\ot)$, and a homotopy class
$$
[G_t]\in\pi_1\Cont(D^{4n-1},\partial;\xi_\ot)\sse \pi_1\Cont(S^{4n-1};\xi_\ot).
$$
Observe that the path $\{G_t\}$ is smoothly isotopic to $\{\mu_t\}$ because $g_{t,1}$ is the time 1--flow of a vector field, and thus $[G_t]=[\mu_t]\in\pi_1\Diff(D^{4n-1},\partial;J_\st)$ maps to the class of the Kervaire sphere in $\pi_0\Diff(D^{4n},\partial;J_\st)$. This establishes the core of the argument. 

\begin{proof}[Proof of Theorem \ref{thm:main}] By applying Proposition \ref{Lem:Moser} to the loop of contactomorphisms $\{G_t\}_{t\in[0,1]}$ constructed in the previous subsection and the symplectization of the overtwisted contact manifold $(D^{4n-1},\xi_\ot)$ we obtain the statement of Theorem \ref{thm:main}. 
\end{proof}

\begin{remark} 
The Gromoll map $\la_{1,1}:\pi_1 \Diff(D^{2n-1}, \partial) \longrightarrow \pi_0 \Diff(D^{2n}, \partial)$ is surjective. Fix a class $[f] \in \pi_0 \Diff(D^{2n}, \partial)$ and a lift  $[ \{ f_t \} ] \in \pi_1 \Diff(D^{2n-1}, \partial)$. Then if $[ \{ f_t \} ]$ lies in the image of the forgetful map $\pi_1 \Diff(D^{2n-1}, \partial ;J) \longrightarrow \pi_1 \Diff(D^{2n-1}, \partial)$, one can apply the arguments in this section to upgrade $[ \{ f_t \} ]$ to a path $[ \{ \tilde{f}_t \} ] \in \pi_1 \Cont (D^{2n-1}, \partial; \xi_{ot})$, and in turn a representative for $f$ in $\Symp(D^{2n}, \partial; d(e^t \alpha_{ot}))$. We remark that for any class in $\ker (\pi_1 \Diff(\D^{2n-1}, \partial; J) \to \pi_0 \Diff (\D^{2n}, \partial))$, our construction yields a smoothly trivial symplectomorphism which may or may not be symplectically trivial (or even trivial as an almost complex map).
\end{remark}

\begin{remark}\label{rem:order}
Our construction associates a compactly supported symplectomorphism $f_A$ to any element of $\pi_{2n} U(n) \cong \Z/(2n)!$, say with representative $A: \R^{2n} \to U(n)$. Set $A^r (x) = (A(x))^r$. One can check that $f_{A^r}$ is Hamiltonian isotopic to $(f_A)^r$. (One strategy is to deform $A^r$ to a representative given by $r$ copies of $A$ on $r$ disjoint balls in the domain, and follow the steps of the above construction.) On the other hand, picking a null-homotopy from $A^{(2n)!}$ to the identity and following the above steps, one can now see that $f_A^{(2n)!}$ is Hamiltonian isotopic to the identity. (Formally, one would use parametric versions of e.g.~Proposition \ref{prop:milnormunkres2}.) Therefore, the map of Theorem \ref{thm:main} has order at most $(2n)!$ in $\pi_0\Symp(D^{4k},\partial;\omega_{\textrm{ot}})$.
\end{remark}

\subsection{$3$- and $5$-dimensional families of contactomorphisms}

Following the argument in the previous Subsection \ref{ssec:loop}, starting from the 3 and 5--dimensional families of almost contactomorphisms of Proposition \ref{prop:higher_dim_almost_contact}, we obtain the following result:

\begin{prop}\label{prop:higher_htpy_groups_contacto}
For $n \geq 3$ odd, there are classes
\[
[H_t ] \in \pi_3 \Cont (D^{4n-3}, \partial; \xi_\ot) \quad \text{and} \quad 
[K_t ] \in \pi_5 \Cont (D^{4n-6}, \partial; \xi_\ot) 
\]
such that under the composition
\[
\pi_3 \Cont (D^{4n-3}, \partial; \xi_\ot) \longrightarrow \pi_3 \Diff(D^{4n-3}, \partial ) \longrightarrow \pi_0 \Diff(D^{4n}, \partial),
\]
where the first is induced by inclusion, and the second is a Gromoll map,
the class $[H_t]$ maps to the clutching map for the Kervaire sphere, and similarly for $[K_t]$. In particular, for any odd $n$ such that $n \notin \{ 1,3,7,15,31 \}$, the homotopy groups 
\[ \pi_3 \Cont (D^{4n-3}, \partial; \xi_\ot) \quad \text{and} \quad 
 \pi_5 \Cont (D^{4n-5}, \partial; \xi_\ot)
\]
are non-trivial.\hfill$\Box$
\end{prop}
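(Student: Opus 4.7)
The plan is to adapt the argument of Subsection \ref{ssec:loop} to higher-parameter families of almost contactomorphisms, replacing the one-parameter input of Proposition \ref{prop:milnormunkres2} by the higher-parameter input supplied by Proposition \ref{prop:higher_dim_almost_contact}.

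For the class $[H_t]$, start with the diffeomorphism $\nu \in \Diff^c(\C^{2n})$ from Proposition \ref{prop:higher_dim_almost_contact}, which by Property 1' fixes $x$ and $y$ and preserves $\|z\|$, hence also preserves $\|(y,z)\|$, the distance to the $x$-plane. Restricting $\nu$ to the spheres $\{x\}\times S^{4n-3}(t)$ in the decomposition $\C\times(\C^{2n-1}\setminus\{0\})\cong \C\times S^{4n-3}\times(0,+\infty)$ yields a family $\{\nu_{x;t}\}$ of diffeomorphisms of $S^{4n-3}$ parametrized by $(x,t)\in\C\times(0,+\infty)\cong\R^3$. By Properties 2' and 3', this family is compactly supported in parameters and each $\nu_{x;t}$ is the identity on a fixed disk $\Delta\subseteq S^{4n-3}$ (the locus $\{\|z_1\|<0.1\}\cup\{\|z_2\|<0.1\}$), so the family descends to a compactly supported $3$-parameter family of diffeomorphisms of $D^{4n-3} := S^{4n-3}\setminus \Delta$. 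Property II' upgrades this to a family of almost contactomorphisms.

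Next, following Subsection \ref{ssec:loop} mutatis mutandis, equip $D^{4n-3}$ with an overtwisted contact structure $\xi_\ot$ that is standard near the boundary, lies in the almost contact class of $\xi_\st$, and contains an overtwisted disk in a shell disjoint from the supports of the $\nu_{x;t}$. The $4$-parameter family of almost contact structures $\{\xi'_{x,t,s}\}$ interpolating between $\xi_\st$ and $(\nu_{x;t})_*\xi_\st$, provided by Property II', is modified (by inserting a fixed overtwisted disk in the shell) to a family $\{\xi''_{x,t,s}\}$ interpolating between $\xi_\ot$ and $(\nu_{x;t})_*\xi_\ot$ relative to a neighborhood of $\partial D^{4n-3}\cup \Delta$. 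Applying the relative parametric $h$--principle of Borman--Eliashberg--Murphy \cite[Theorem 1.2]{BEM} yields a $4$-parameter family of genuine contact structures $\xi_{x,t,s}$ with the same endpoints, and a parametric application of Gray stability then gives a family $\{g_{x,t,s}\}$ of compactly supported diffeomorphisms with $g_{x,t,0}=\id$ and $(g_{x,t,1}\circ\nu_{x;t})_*\xi_\ot=\xi_\ot$. Setting $H_{x,t}:=g_{x,t,1}\circ\nu_{x;t}$ produces the desired class $[H_t]\in\pi_3\Cont(D^{4n-3},\partial;\xi_\ot)$.

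To identify the image under the Gromoll composition, observe that $g_{x,t,s}$ is a compactly supported isotopy from the identity, so $\{H_{x,t}\}$ and $\{\nu_{x;t}\}$ represent the same class in $\pi_3\Diff(D^{4n-3},\partial)$. Iterating the Gromoll suspension $\la_{3,3}$ amounts to reassembling the sphere restrictions into a compactly supported diffeomorphism of $D^{4n}$ via the identification $\C\times(\C^{2n-1}\setminus\{0\})\cong\C\times S^{4n-3}\times(0,+\infty)$, and this reassembly recovers $\nu$ itself, which is the Kervaire clutching map by Proposition \ref{prop:milnormunkres}. The class $[K_t]$ is produced identically from the decomposition $\C^{2n}=\C^2\times\C^{2n-2}$ and the $5$-parameter restriction to the spheres $\{(x,y)\}\times S^{4n-5}(t)$, again invoking Property II' and Proposition \ref{prop:higher_dim_almost_contact}. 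The principal technical hurdle is the parametric application of \cite[Theorem 1.2]{BEM}; this goes through because the homotopy is being run relative to a fixed embedded overtwisted disk and to a boundary neighborhood that is preserved by the entire $3$- or $5$-parameter family, placing the problem squarely in the setting where the relative parametric $h$-principle applies.
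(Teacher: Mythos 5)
Your proposal is correct and follows essentially the same route as the paper, whose proof of this proposition consists precisely of rerunning the argument of Subsection \ref{ssec:loop} (insertion of a fixed overtwisted disk away from the supports, the relative parametric $h$--principle of \cite{BEM}, and parametric Gray stability) with the $3$-- and $5$--parameter families of almost contactomorphisms supplied by Proposition \ref{prop:higher_dim_almost_contact}. Your identification of the Gromoll image by reassembling the sphere restrictions into $\nu$ likewise matches the paper's treatment of the one-parameter case.
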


An immediate consequence of Propositions \ref{prop:higher_htpy_groups_contacto} and \ref{prop:symplectic_Gromoll} is the following:

\begin{cor}
Consider $(D^{2n}, \omega_\ot)$, the symplectization of the overtwisted contact manifold $(D^{2n-1}, \ker \alpha_\ot)$.
For all odd $n$ with $n \notin \{ 1,3,7,15,31 \}$, the homotopy groups
\[
\pi_2 \Symp(D^{4n-2}, \partial; \omega_\ot) \quad \text{and} \quad
\pi_4 \Symp(D^{4n-4}, \partial; \omega_\ot) 
\]
are non-trivial.\hfill$\Box$
\end{cor}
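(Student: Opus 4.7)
The plan is to apply the contact-to-symplectic Gromoll map $\lambda^c_{k,1}$ of Proposition \ref{prop:symplectic_Gromoll}(2) to the classes $[H_t]$ and $[K_t]$ furnished by Proposition \ref{prop:higher_htpy_groups_contacto}, and then to propagate non-triviality via the naturality square relating $\lambda^c_{k,1}$ to the smooth Gromoll map $\lambda_{k,1}$.

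Concretely, I would set
\[
\alpha_H \,:=\, \lambda^c_{3,1}\bigl([H_t]\bigr) \in \pi_2\Symp\bigl(D^{4n-2},\partial;\omega_\ot\bigr),\qquad
\alpha_K \,:=\, \lambda^c_{5,1}\bigl([K_t]\bigr) \in \pi_4\Symp\bigl(D^{4n-4},\partial;\omega_\ot\bigr),
\]
using the identification of $D^{4n-2}$ (respectively $D^{4n-4}$) with the symplectization of the contact disc supporting $[H_t]$ (respectively $[K_t]$), as in the statement of the corollary. To detect non-triviality I would post-compose with the forgetful map to $\pi_k\Diff$ and then iterate the smooth Gromoll map $\lambda_{k,1}$ until landing in $\pi_0\Diff(D^{4n},\partial)$, which requires two further applications for $\alpha_H$ and four for $\alpha_K$.

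The commutative square of Proposition \ref{prop:symplectic_Gromoll}(2) ensures that at each stage $(\lambda^c_{k,1}$ then forget$)$ agrees with $($forget then $\lambda_{k,1})$. Iterating, the total smooth composite for $\alpha_H$ coincides with
\[
\pi_3\Cont(D^{4n-3},\partial;\xi_\ot)\;\longrightarrow\;\pi_3\Diff(D^{4n-3},\partial)\;\xrightarrow{\lambda_{3,3}}\;\pi_0\Diff(D^{4n},\partial)
\]
evaluated at $[H_t]$, and analogously for $\alpha_K$ with $\lambda_{5,5}$ applied to $[K_t]$. By Proposition \ref{prop:higher_htpy_groups_contacto} these composites equal the clutching map of the Kervaire sphere, which for $n$ odd with $n\notin\{1,3,7,15,31\}$ is non-trivial by the Browder--Hill--Hopkins--Ravenel theorem cited in Proposition \ref{prop:milnormunkres}. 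Hence both $\alpha_H$ and $\alpha_K$ are themselves non-zero. The whole argument is essentially formal; the only point requiring care is verifying that the repeated Liouville $\mathbb{R}$-stabilisations built into the iterated Gromoll squares compose to the symplectic form $\omega_\ot$ on $D^{4n-2}$ and $D^{4n-4}$ appearing in the statement, which is automatic from the description of $\omega_\ot$ as a symplectization.
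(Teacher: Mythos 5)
Your argument is correct and is essentially the paper's own: the corollary is stated there as an immediate consequence of Propositions \ref{prop:higher_htpy_groups_contacto} and \ref{prop:symplectic_Gromoll}, obtained exactly by applying the contact--symplectic Gromoll map to $[H_t]$ and $[K_t]$ and detecting non-triviality through the commuting square and the iterated smooth Gromoll maps landing on the Kervaire clutching class. The only (harmless) cosmetic point is that a single application of the square suffices, since after the one contact--symplectic suspension all remaining Gromoll maps are purely smooth.
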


Browder \cite{Browder-hspace}   proved  that any $h$-space with non-trivial second homotopy group does not have the homotopy type of a finite cell complex, and Hubbuck \cite{Hubbuck} proved that any homotopy-commutative $h$-space which is homotopy equivalent to a finite cell complex has vanishing homotopy groups in all degrees $\geq 2$. 

\begin{cor}
For all odd $n$ with $n \notin \{ 1,3,7,15,31 \}$, each of the spaces 
$$\Symp(D^{4n-2}, \partial; \omega_\ot),\qquad \Cont(D^{4n-3}, \partial; \xi_\ot)$$
$$\Symp(D^{4n-4}, \partial; \omega_\ot),\qquad \Cont(D^{4n-5}, \partial; \xi_\ot)$$
does not have the homotopy type of a finite-dimensional cell complex.
\end{cor}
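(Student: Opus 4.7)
The plan is to realise each of the four spaces as a grouplike H-space possessing a non-trivial higher homotopy group, and then apply the obstructions of Browder and Hubbuck recorded just above.

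Each of $\Symp(D^{4n-j},\partial;\omega_\ot)$ and $\Cont(D^{4n-j},\partial;\xi_\ot)$ is a topological group under composition, hence in particular a grouplike H-space. The non-triviality of the relevant higher homotopy groups is already in hand: the preceding Corollary yields $\pi_j\Symp(D^{4n-j},\partial;\omega_\ot)\neq\{1\}$ for $j\in\{2,4\}$, and Proposition \ref{prop:higher_htpy_groups_contacto} yields $\pi_j\Cont(D^{4n-j},\partial;\xi_\ot)\neq\{1\}$ for $j\in\{3,5\}$, in each case for odd $n$ outside the stated exceptional set.

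For $\Symp(D^{4n-2},\partial;\omega_\ot)$, the non-triviality of $\pi_2$ combined with Browder's theorem rules out finite-dimensional CW homotopy type immediately. For each of the remaining three spaces the non-trivial class sits in degree $3$, $4$, or $5$, so Browder is not directly available, and we instead invoke Hubbuck's theorem on homotopy-commutative H-spaces with non-trivial higher homotopy.

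The main obstacle is verifying the homotopy-commutativity hypothesis in Hubbuck's theorem, since composition on Symp and Cont groups is not manifestly homotopy commutative. I expect this to be the only non-routine step; the natural routes are either to exploit the iterated suspension structure of the Gromoll filtration of Proposition \ref{prop:symplectic_Gromoll} and apply an Eckmann--Hilton argument between the suspension product and composition product on an auxiliary H-space carrying the same non-trivial class, or, in the overtwisted regime, to replace the contactomorphism groups up to weak equivalence by section spaces via the parametric Borman--Eliashberg--Murphy h-principle, where homotopy commutativity follows from the linear fibre structure. Once commutativity is in place, Hubbuck delivers the required conclusion for all three remaining spaces.
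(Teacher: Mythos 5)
Your overall strategy is the paper's own: each of the four spaces is a topological group, hence a grouplike H-space; the case $\pi_2\Symp(D^{4n-2},\partial;\omega_\ot)\neq\{1\}$ is killed by Browder's theorem, and the remaining three spaces, whose nontrivial classes sit in $\pi_3,\pi_4,\pi_5$ (correctly sourced from the preceding corollary and Proposition \ref{prop:higher_htpy_groups_contacto}), require Hubbuck's theorem. The paper gives nothing beyond these two citations, so up to this point you have reconstructed its argument; and you are right that homotopy commutativity is the genuine crux — Browder alone cannot handle degrees $3$, $4$, $5$, since finite H-spaces such as $SU(2)$ have nontrivial $\pi_3$.

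The weak point is that you do not establish homotopy commutativity, and neither of your proposed routes is convincing. Hubbuck's theorem must be applied to the H-space structure of $\Symp(D^{4n-j},\partial;\omega_\ot)$ or $\Cont(D^{4n-j},\partial;\xi_\ot)$ itself, so exhibiting the nontrivial class on some auxiliary homotopy-commutative H-space (via the suspension maps of Proposition \ref{prop:symplectic_Gromoll}) does not let you conclude anything about the finiteness of the original space. Likewise, the parametric Borman--Eliashberg--Murphy h-principle is a statement about spaces of contact structures overtwisted at a fixed disc; it does not identify $\Cont(D^{4n-j},\partial;\xi_\ot)$ with a space of sections of a bundle with linear-type fibres, so the ``linear fibre'' commutativity argument is not available as stated. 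The natural way to supply the hypothesis is the classical disjoint-support (Eckmann--Hilton type) argument for compactly supported transformation groups: produce two self-embeddings with disjoint images, each isotopic to the identity through embeddings preserving the geometric structure, and conjugate; for the symplectizations this can be run using displaceability of compact sets in a symplectization and the fact that conjugation by a conformally symplectic map (e.g.\ the Liouville translation) preserves $\Symp^c$, while for the overtwisted contact discs one still has to produce the shrinking contact embeddings, which is exactly where nontrivial work remains. As written, your proposal leaves this step open, so the argument for the three spaces other than $\Symp(D^{4n-2},\partial;\omega_\ot)$ is incomplete.
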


\section{Concluding Remarks}\label{sec:app}

This section collects some supplementary material. First, we discuss the symplectic structure obtained by symplectizing an overtwisted contact structure. Then, we globalize the construction in the previous section by implementing it inside a general symplectic cobordism. 
Finally, we mention some facets of the problem in relation to the standard symplectic structure on Euclidean space. 

\subsection{Overtwisted Symplectizations}\label{ssec:sympot}

Recall that an exact symplectic manifold $(X,\omega = d\theta)$ is  \emph{Weinstein} if it admits a (complete) Liouville vector field $Z$, $\mathcal{L}_Z(\omega) = \omega$, which is gradient-like for an exhausting Morse function on $X$. 

\begin{prop}\label{prop:non-Wein}
Let $(\R^{2n-1},\xi_\ot)$ be an overtwisted contact structure, $\sS(\R^{2n-1},\xi_\ot)$ its symplectization and $n\geq3$. Then $\sS(\R^{2n-1},\xi_\ot)$ does not support a Weinstein structure.
\end{prop}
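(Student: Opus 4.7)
The approach is proof by contradiction, combining the overtwisted concave end with the non-fillability of overtwisted contact manifolds in dimensions $\ges 3$. Suppose $W := \sS(\R^{2n-1}, \xi_\ot)$ admits a Weinstein structure, with complete Liouville vector field $Z$ and exhausting Morse function $\phi$. Each sublevel set $W_c := \{\phi \les c\}$ is then a compact Weinstein domain, hence a strong symplectic filling of its convex contact boundary $\partial W_c$. Because the family $\{W_c\}_{c>0}$ exhausts $W$, for any prescribed $M, R > 0$ one may pick $c$ large enough that $W_c$ contains the compact cylindrical region $D^{2n-1}(R) \x [-M-1, -M]$ of the concave end of the symplectization.

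The next step is to locate, for $R$ sufficiently large, an overtwisted disc $D_\ot$ in the sense of \cite{BEM} inside the slice $\Sigma := D^{2n-1}(R) \x \{-M\}$ with its induced contact structure, which is a compact piece of $(\R^{2n-1}, \xi_\ot)$ and hence contains such a disc once $n \ges 3$. Equip $W_c$ with a compatible almost-complex structure $J$ which is cylindrical and $\partial W_c$--convex near the convex Weinstein boundary. A Bishop family argument in the style of Niederkr\"uger, extended to the BEM overtwisted-disc setting, produces a non-empty moduli space of $J$--holomorphic discs in $W_c$ with boundary on $D_\ot$. By $J$--convexity of $\partial W_c$ the maximum principle prevents these discs from escaping through the Weinstein convex boundary, and by Gromov--SFT compactness the moduli space is a compact one--dimensional manifold with boundary. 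A parity count of its boundary then yields the desired contradiction.

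The main obstacle is the holomorphic curve analysis in the second step: while this non-fillability mechanism is classical, care is needed to adapt it to the present situation in which $D_\ot$ lies in the interior of a Weinstein domain rather than on the concave boundary of a compact filling. Concretely, one must rule out bubbling escaping through $\partial W_c$ and verify that the boundary count of the Bishop moduli space is controlled purely by the central cone point of $D_\ot$, reproducing the contradiction obtained in \cite{BEM} and Niederkr\"uger's original plastikstufe construction. The hypothesis $n \ges 3$ enters here, guaranteeing the existence of a BEM overtwisted disc of the correct dimension in the ambient $(2n-1)$--dimensional contact slice.
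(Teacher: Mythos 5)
Your strategy has a genuine gap at its central step. The Bishop-family/plastikstufe obstruction (Niederkr\"uger's argument and its descendants) is a theorem about an overtwisted object sitting \emph{in the convex contact boundary} of a compact symplectic filling: the convexity of the boundary is what confines the holomorphic discs, what prevents their boundaries from escaping through the edge of the overtwisted disc (this uses local models inside the contact boundary near $\partial D_\ot$), and what underlies the a priori energy bound and the identification of the ends of the one-dimensional moduli space with the Bishop family at the cone point. In your setup the overtwisted disc lies on a compact piece $D^{2n-1}(R)\x\{-M\}$ of an \emph{interior} contact-type hypersurface-with-boundary of the sublevel set $W_c$, and none of these ingredients is available: holomorphic discs with boundary on $D_\ot$ may cross to either side of the slice, nothing in the cited machinery controls their boundaries near $\partial D_\ot$ or near the edge $\partial\bigl(D^{2n-1}(R)\x\{-M\}\bigr)$, and the compactness and boundary-parity statements you assert are exactly the unproved content. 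There is no theorem in the literature of the form ``a Weinstein domain cannot contain a compact overtwisted contact-type slice in its interior,'' and your proof would in effect be claiming one; acknowledging that ``care is needed'' does not bridge this, since that step \emph{is} the proof. (A smaller point: the hypothesis $n\ges 3$ is not about the existence of overtwisted discs, which exist in all dimensions; in the paper it is needed so that Legendrians can be loose, i.e.\ so that the ambient contact dimension is at least five.)

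For comparison, the paper's proof avoids holomorphic curve analysis entirely and uses only soft ingredients plus Floer theory as a black box: in any symplectization every compact subset is Hamiltonian displaceable, whereas a closed exact Lagrangian in a Weinstein manifold is never displaceable because its self-Floer cohomology is defined and non-zero; so it suffices to produce a closed exact Lagrangian in $\sS(\R^{2n-1},\xi_\ot)$. This is done by taking a Legendrian unknot $\La_0$, capping it on the concave side by the standard exact Lagrangian disc, and capping it on the convex side using that $\La_0$ is loose (overtwistedness, \cite{BEM,CMP15}) together with the Eliashberg--Murphy $h$-principle for exact Lagrangian caps \cite{EM13}. If you want to salvage a holomorphic-curve route, you would need to work with the genuine concave end of the symplectization (not an interior compact slice) and prove the required compactness and confinement statements from scratch; as written, your argument does not establish the proposition.
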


\begin{proof}
In a symplectization, any compact subset can be Hamiltonian displaced from itself.  On the other hand, in a Weinstein manifold a closed exact Lagrangian submanifold is never Hamiltonian displaceable since its self-Floer cohomology is well-defined and non-vanishing. It therefore suffices to construct a closed exact Lagrangian in $\sS(\R^{2n-1},\xi_\ot)$.

Consider the Legendrian unknot $\Lambda_0\sse(\R^{2n-1},\ker(e^1\a_\ot))$ at the contact level of unit height, and note that in the concave piece of the symplectization $\{t\leq1\}\sse\sS(\Op(\La_0),\xi_\std)$ of a Darboux neighborhood $(\Op(\La_0),\xi_\std)$ of this Legendrian $\Lambda_0$ there exists an embedded exact Lagrangian disk $L_-=D_0$ which bounds the Legendrian unknot $\La_0$. Simultaneously, the contact structure $(\R^{2n-1},\ker(e^1\a_\ot))$ is overtwisted and thus the Legendrian unknot $\La_0$ is also a loose Legendrian \cite{BEM,CMP15}. The existence $h$--principle for exact Lagrangian embeddings with concave Legendrian boundary \cite{EM13} now implies that there exists a exact Lagrangian $L_+\sse\{t\geq1\}\sse(\R^{2n-1},\ker(e^1\a_\ot))$ with boundary $\La_0$. This constructs an exact Lagrangian embedding $L=L_-\cup_{\partial\La_0}L_+$ inside the symplectization of any overtwisted contact structure.
\end{proof}

\subsection{Globalisation to symplectic cobordisms}\label{ssec:globalising}

The construction of symplectic structures with symplectic exotic mapping classes detailed in Section \ref{sec:proof} can be implanted in a local manner into the concave end of a $2n$--dimensional symplectic cobordism $(X,\omega)$. Indeed, it suffices to use the following Weinstein cobordism $(M,\la,f)$ which interpolates, as a smooth concordance, between an overtwisted contact structure $(S^{2n-1},\xi_\ot)$ in the concave end and the standard contact structure $(S^{2n-1},\xi_\std)$.

\begin{prop}[\cite{CMP15}] 
Suppose that $n\geq3$.  Then there is a Weinstein structure $(M,\lambda, f)$ on the smoothly trivial cobordism $M \cong [0,1]\x S^{2n-1}$ such that $(\dd_+M,\lambda) \cong (S^{2n-1},\xi_\std)$ and $(\dd_-M,\ker(\lambda))$ is the unique overtwisted contact sphere in the almost contact class of $\xi_\std$.
\end{prop}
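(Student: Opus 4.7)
The plan is to construct $(M,\lambda,f)$ as the complement of a flexible Weinstein filling of the overtwisted sphere inside the standard Weinstein ball. The construction relies fundamentally on the Cieliebak--Eliashberg flexibility h-principle, which is valid in the stable range $2n \geq 6$, and this is precisely the source of the dimensional restriction $n \geq 3$.

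\textbf{Step 1: A flexible filling of the overtwisted sphere.} First I would construct a flexible Weinstein domain $(W_\ot,\lambda_\ot,f_\ot)$ smoothly diffeomorphic to $D^{2n}$ with convex contact boundary $(S^{2n-1},\xi_\ot)$. One explicit recipe starts with the standard Weinstein ball $D^{2n}_\std$ and attaches a cancelling pair of Weinstein handles: a subcritical $(n-1)$-handle along an isotropic $(n-2)$-sphere (which can be chosen freely by Gromov's isotropic h-principle), followed by a critical $n$-handle along a Legendrian $(n-1)$-sphere that cancels the subcritical handle smoothly and is loose in the sense of Murphy; both are possible for $n \geq 3$. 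The smooth topology is unchanged, so $W_\ot \cong D^{2n}$, and the critical attachment along a loose Legendrian makes the resulting Weinstein structure flexible. Since a flexible Weinstein structure has overtwisted convex contact boundary in the prescribed almost contact class, $\partial W_\ot$ is the unique overtwisted contact structure $\xi_\ot$ in the almost contact class of $\xi_\std$.

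\textbf{Step 2: Embedding $W_\ot$ inside the standard ball.} The second step uses the Cieliebak--Eliashberg flexible Weinstein embedding theorem: for $2n \geq 6$, any flexible Weinstein domain admits a Liouville embedding into the standard symplectic ball of sufficiently large Liouville radius. Apply this to embed $W_\ot \hookrightarrow D^{2n}_\std$ (after rescaling the target if needed). Because the embedding is Liouville and $W_\ot$ is compact, after a Liouville homotopy of the target we may arrange that the image of $W_\ot$ is a compact sub-level set of a Morse function underlying the Weinstein structure on $D^{2n}_\std$ and contains the origin.

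\textbf{Step 3: The cobordism.} Finally, set $M := D^{2n}_\std \setminus \Int(W_\ot)$, with $\lambda$ and $f$ induced by the Weinstein structure on $D^{2n}_\std$. The Liouville vector field points outward along $\partial D^{2n}_\std$ and outward from $W_\ot$ into $M$, so $\partial_+ M = \partial D^{2n}_\std = (S^{2n-1},\xi_\std)$ is convex and $\partial_- M = \partial W_\ot = (S^{2n-1},\xi_\ot)$ is concave. Smoothly $M$ is the complement of an embedded $2n$-ball in a larger $2n$-ball, hence diffeomorphic to $S^{2n-1} \times [0,1]$ by the smooth annulus theorem. The main obstacle is Step 2 — realizing $W_\ot$ as a \emph{concentric} sub-level set inside $D^{2n}_\std$ rather than merely as an abstract Liouville embedding — which is exactly where the full strength of the Cieliebak--Eliashberg flexible embedding h-principle, and hence the hypothesis $n \geq 3$, is required.
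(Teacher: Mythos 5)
There is a genuine gap, and it is conceptual rather than technical: your Step 1 is impossible. A Weinstein (indeed any Liouville) domain is by definition a symplectically convex filling of its boundary, and overtwisted contact manifolds of dimension $\geq 5$ admit no such fillings (this follows from the plastikstufe/bLob filling obstructions of Niederkr\"uger et al.\ combined with the geometric criteria for overtwistedness of \cite{CMP15} --- the very paper being quoted). In particular, the boundary of the flexible ball obtained by attaching a smoothly cancelling handle pair with loose critical attaching sphere is a \emph{fillable}, hence tight, contact structure in the almost contact class of $\xi_\std$; the assertion that a flexible Weinstein domain has overtwisted boundary is false, so $\partial W_\ot \neq (S^{2n-1},\xi_\ot)$. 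Consequently Steps 2--3 cannot deliver the statement: the concave end of $M = D^{2n}_{\std}\setminus \Int(W_\ot)$ bounds the compact Weinstein subdomain $W_\ot$ inside the ball, which is exactly a filling, so that end can never be overtwisted. Any strategy that exhibits the would-be overtwisted end as the convex boundary of a compact Weinstein or Liouville subdomain of $(D^{2n},\omega_{\std})$ is ruled out for this reason. (A secondary issue: even granting a Liouville embedding as in Step 2, the complement of a Liouville subdomain is a priori only a Liouville cobordism, and upgrading it to a Weinstein one would need a separate argument.)

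For comparison: the paper offers no proof of this proposition --- it is imported wholesale from \cite{CMP15} --- and the argument there is structured in the opposite way, with the overtwisted sphere placed at the \emph{concave} end from the outset. One builds the Weinstein structure upward from $(S^{2n-1},\xi_\ot)$, using the flexibility phenomena that are available precisely because the negative end is overtwisted: for instance, the existence h-principles for symplectic/Weinstein structures on cobordisms with overtwisted concave boundary in the spirit of \cite{EM15}, or an explicit attachment to the overtwisted end of a smoothly cancelling pair of Weinstein handles whose critical handle runs along a loose Legendrian, after which the convex end is identified with $(S^{2n-1},\xi_\std)$. This is where looseness, and hence the hypothesis $n\geq 3$ (equivalently $2n-1\geq 5$), genuinely enters --- not through a flexible filling of $\xi_\ot$, which does not exist. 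If you want to internalize the asymmetry: Weinstein cobordisms out of an overtwisted end are abundant, but Weinstein cobordisms (in particular fillings) \emph{into} an overtwisted convex end are obstructed, and your construction needs the latter.
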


This Weinstein cobordism $(M,\la,f)$ can be implanted in any symplectic cobordism $(X,\omega)$ by performing a vertical connected sum with a piece of the symplectization of the non--empty concave end $(\partial_-X,\la_-)$. For a closed symplectic manifold $(\wt X,\omega)$, corresponding to the case where the concave end is empty, we can remove a Darboux ball and obtain a symplectic cobordism $(X,\omega)$ whose concave end $(\partial_-X,\la_-)=(\partial_ X,\la_-)$ is contactomorphic to the standard contact sphere $(S^{2n-1},\xi_\std)$. Then, the Weinstein cobordism $(M,\la,f)$ can be concatenated and yields a symplectic structure
$$(X,\omega_\ot):=(M,\la,f)\cup_{(S^{2n-1},\xi_\std)} ((\wt X,\omega)\setminus (D^{2n},\la_\st))$$
with a conical singularity at the concave end $(\partial_-M,\la)$.

These symplectic structures $(X,\omega_\ot)$ have a unique concave overtwisted end or, equivalently, a conical symplectic singularity modelled on an overtwisted sphere.  Such conical symplectic structures have appeared in symplectic topology before:  they play an essential role in the $h$--principle for symplectic cobordisms \cite{EM15}, since the $h$--principle fails unless the singularities are allowed \cite{Gr85, McDuff};  and overtwisted conical ends are the model for the singularities of near--symplectic structures \cite{ADK05,Taubes98}.

Consider the map 
$$i^c:\Diff^c(M)\longrightarrow\Diff^c(X)$$
induced by the  inclusion $i:(M,\la,f)\longrightarrow (X,\omega_\ot)$. The diffeomorphisms $f\in\Diff^c(M)$ constructed in Section \ref{sec:proof} have non-trivial image in  $\pi_0(i^c)([f])\in\Diff^c(X)$ precisely when the Kervaire sphere (is smoothly exotic and) does not lie in the inertia group of $X\times S^1$.

\begin{lemma} \label{lem:inertia}
Let $(X,\omega)=(\Sigma_1\times\cdots\times\Sigma_n,\omega_1\oplus\cdots\oplus\omega_n)$ be the product of compact symplectic surfaces $(\Sigma_i,\omega_i)$, $1\leq i\leq n$, each one of arbitrary genus. The inertia group $I(X\times S^1)$ vanishes.
\end{lemma}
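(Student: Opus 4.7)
The plan is to apply smoothing theory together with a stable wedge splitting of $X \x S^1$ into spheres. For a closed topological manifold $M$ of dimension $d \ges 5$, the pointed set $[M, \TOP/O]$ classifies smooth structures on $M$ up to concordance, and the natural map $p^* : \Theta_d \cong \pi_d(\TOP/O) \to [M, \TOP/O]$ sending $\Sigma$ to the concordance class of $M \# \Sigma$ is precomposition with the pinch map $p : M \to S^d$ that collapses the complement of a small disc. Two smoothings give diffeomorphic manifolds precisely when they lie in a common orbit of the natural $\op{Homeo}(M)$-action on $[M, \TOP/O]$ by pullback. Because the basepoint of $[M, \TOP/O]$, namely the standard smoothing represented by a constant map, is fixed by every such pullback, its orbit is a singleton, and consequently $I(M) = \ker(p^*)$.

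It therefore suffices to show that $p^*$ is injective for $M = X \x S^1$ of dimension $d = 2n + 1$. For each oriented surface factor $\Sigma_i$ of genus $g_i$, there is a classical stable wedge decomposition $\Sigma^\infty \Sigma_i \simeq S^2 \vee \bigvee_{2g_i} S^1$: the attaching map of the top $2$-cell is a product of commutators in $\pi_1(\bigvee_{2g_i} S^1)$, and suspending this word lands it in the abelian group $\pi_2(\bigvee S^2)$, hence it is null-homotopic. Iterating the stable product formula $\Sigma^\infty_+ (A \x B) \simeq \Sigma^\infty_+ A \wedge \Sigma^\infty_+ B$, one obtains a stable splitting of $\Sigma^\infty_+(X \x S^1)$ as a wedge of spheres, containing a unique top-dimensional summand $S^{2n+1}$ arising from the smash product of all the top cells.

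The inclusion of this top summand provides a stable section of the pinch map $p : X \x S^1 \to S^{2n+1}$, so that $p^*$ is split-injective on every generalised cohomology theory, and in particular on $[-, \TOP/O]$. Combined with the reduction in the first paragraph, this yields $I(X \x S^1) = 0$.

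The delicate step is the identification $I(M) = \ker(p^*)$: one must invoke the smoothing-theoretic classification together with the observation that the basepoint of $[M, \TOP/O]$ is fixed by the full $\op{Homeo}(M)$-action by pullback, so that a diffeomorphism between the two smooth structures forces equality of the corresponding concordance classes, not merely equality modulo the mapping class group action.
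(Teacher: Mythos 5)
The stable-splitting half of your argument is fine: the top cell of $X\x S^1$ does split off stably (commutator attaching maps die after one suspension, and the splitting persists under smashing the factors), so $p^*\colon \Theta_{2n+1}\cong\pi_{2n+1}(\TOP/O)\to[X\x S^1,\TOP/O]$ is split injective and hence the \emph{concordance} inertia group $I_c(X\x S^1)=\ker(p^*)$ vanishes. The gap is exactly at the step you flag as delicate, namely the identification $I(M)=\ker(p^*)$. The action of $\op{Homeo}(M)$ relevant to classifying smoothings up to diffeomorphism is pullback of smooth structures, and under the bijection between concordance classes and $[M,\TOP/O]$ (which depends on the chosen base smoothing) a homeomorphism $h$ sends the basepoint to the class of $h^*\mathcal{S}_{\mathrm{std}}$. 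That smoothing is \emph{diffeomorphic} to the standard one (via $h$ itself), but nothing forces it to be \emph{concordant} to it; if this were automatic one would have $I(M)=I_c(M)$ for every manifold, which is precisely the nontrivial content of such statements, not a formality. If instead you interpret the action literally as precomposition $f\mapsto f\circ h$ of classifying maps (which does fix constant maps), then the assertion that diffeomorphism classes of smoothings are the orbits of that action is no longer correct. Either way, the conclusion ``the orbit of the basepoint is a singleton, hence $I(M)=\ker(p^*)$'' is unjustified, and what your argument actually proves is $I_c(X\x S^1)=0$, which is only a subgroup of the inertia group (concordant smoothings are diffeomorphic in dimension $\ge 5$, so $I_c\sse I$, not conversely).

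To close the gap you would need a genuine geometric input showing that no self-homeomorphism (or diffeomorphism) of $X\x S^1$ can ``absorb'' a connected sum with an exotic sphere, which is essentially the lemma itself. This is what the paper's proof supplies by a different route: by Levine, $I(X\x S^1)$ is identified with the group of smooth mapping classes of $X$ supported in a disk and pseudo-isotopic to the identity, whence $I(X\x S^1)$ is contained in the inertia group of any manifold containing $X$ in codimension one; since each surface bounds in $\R^3$, the product $X$ embeds in $S^{2n+1}$ in codimension one, giving $I(X\x S^1)\sse I(S^{2n+1})=0$. Your smoothing-theoretic reduction would become a complete (and genuinely different) proof only if supplemented by an argument that the $\op{Homeo}(X\x S^1)$-action on concordance classes fixes the standard structure, e.g.\ via such an embedding or pseudo-isotopy input; as written, that key point is asserted rather than proved.
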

\begin{proof} The inertia group $I(X\times S^1)$ equals the group of smooth mapping classes on $X$ which are supported in a disk and pseudo--isotopic to the identity \cite[Proposition 1]{Levine}. Consequently, $I(X\times S^1)$ is contained in the inertia group of any manifold containing $X$ in codimension 1 \cite[Theorem 4.1]{Ge16}. Thus $I(X\times S^1)\subseteq I(S^{2n+1})=0$, thanks to the embedding $X\subseteq S^{2n+1}$. (When each $\Sigma_i$ has genus at most 1, the result was known from \cite{Schultz}.) \end{proof}

In particular, we obtain smoothly non-trivial symplectomorphisms of ``punctured" symplectic structures on tori and products of 2-spheres.

\subsection{The standard symplectic structure}\label{ssec:stdsymp}

A natural question is whether one can use the Milnor-Munkres description of the clutching map of the Kervaire sphere to find a representative for it that is a symplectomorphism for the standard symplectic form; this remains open.

There exist  representatives for the generator of $\pi_{2n} U(n)$ with large amounts of symmetry, e.g.~coming from Samelson products \cite{Bott}; explicit formulae are  given in \cite{PR}. Before launching herself into calculations, the curious reader should note that for these representatives we have checked that the linear interpolation between the standard  symplectic form and its pullback is not a path of symplectic forms.

We conclude with three remarks, whose proofs we only outline, given that they pertain to non-trivial symplectomorphisms of $(D^{2k},\omega_{\std})$ which are not known to exist.

\begin{remark} Let $\phi \in \Symp (D^{2k},\partial;\omega_{\std})$.
\begin{enumerate}
\item There is a well-defined canonically $\Z$-graded Floer cohomology group $HF^*(\phi)$, see \cite{Seidel:more_vm, McLean, Uljarevic}.  We claim this is necessarily isomorphic to $HF^*(\id)$, hence of rank 1 and concentrated in degree zero. Indeed, one can implant the graph of $\phi$ into the zero-section of $T^*S^{2k}$ to obtain an exact Lagrangian submanifold $L_{\phi}$ which is Floer-theoretically isomorphic to the zero-section \cite{FSS}, and then argue that $HF^*(\phi)$ appears as a summand in $HF^*(S^{2k},L_{\phi})$.\\


\item If $\phi$ exists, it yields  a non-trivial element in $\pi_0\Symp(T^{2k},\omega_{\std})$, by Lemma \ref{lem:inertia}. On the other hand, from the arguments of \cite[Section 9]{AbouzaidSmith} and Orlov's classification of autoequivalences of derived categories of abelian varieties, one sees that this symplectomorphism acts trivially on the (unobstructed or full) Fukaya category $D^{\pi}\scrF(T^{2k})$. This gives a strong sense in which $\phi$ would be invisible to classical Floer theory.\\

\item If $\phi$ has image equal to the Kervaire sphere under the map $\pi_0\Symp(D^{2k},\partial; \omega_{\std}) \to \pi_0\Diff(D^{2k},\partial)$, and if $k$ is even and $2k+1\neq 2^j-3$, there are counterexamples to the ``nearby Lagrangian conjecture". Indeed, either $L_{\phi} \subset T^*S^{2k}$ provides a counterexample, or, by using a suspention of a Hamiltonian isotopy from $L_{\phi}$ to the zero-section, one can construct a Lagrangian embedding $\Sigma_{[\phi]\circ u^2} \hookrightarrow T^*S^{2k+1}$ for some $u\in \Diff(D^{2k},\partial)$ (compare to \cite{Evans-Rizell}; the unknown reparametrization map $u$ arises from the fact that the isotopy to the zero-section need not be one of parametrized Lagrangians).  The dimension constraints on $k$ imply \cite[Theorem 1.1]{Brumfiel} that the Kervaire sphere has no square root in $\Theta_{2k+1}$, hence $\Sigma_{[\phi]\circ u^2}$ is exotic. This connects the existence question considered in this paper to the nearby Lagrangian conjecture, which has seen much recent activity.
\end{enumerate}
\end{remark}

\bibliography{bib}{}

\begin{thebibliography}{10}

\bibitem{AbouzaidSmith}
M.~Abouzaid and I.~Smith.
\newblock Homological mirror symmetry for the 4-torus.
\newblock {\em Duke Math. J.}, 152(3):373--440, 2010.

\bibitem{ADK05}
Denis Auroux, Simon~K. Donaldson, and Ludmil Katzarkov.
\newblock Singular {L}efschetz pencils.
\newblock {\em Geom. Topol.}, 9:1043--1114, 2005.

\bibitem{BEM}
M.~S. Borman, Ya. Eliashberg, and E.~Murphy.
\newblock Existence and classification of overtwisted contact structures in all
  dimensions.
\newblock {\em Acta Math.}, 215(2):281--361, 2015.

\bibitem{Bott}
Raoul Bott.
\newblock A note on the {S}amelson product in the classical groups.
\newblock {\em Comment. Math. Helv.}, 34:249--256, 1960.

\bibitem{Browder-hspace}
W.~Browder.
\newblock Torsion in {$H$}-spaces.
\newblock {\em Ann. of Math. (2)}, 74:24--51, 1961.

\bibitem{Browder}
W.~Browder.
\newblock The {K}ervaire invariant of framed manifolds and its generalization.
\newblock {\em Ann. of Math. (2)}, 90:157--186, 1969.

\bibitem{Brumfiel}
G.~Brumfiel.
\newblock The homotopy groups of {$BPL$} and {$PL/O$}. {III}.
\newblock {\em Michigan Math. J.}, 17:217--224, 1970.

\bibitem{BL}
Dan Burghelea and Richard Lashof.
\newblock The homotopy type of the space of diffeomorphisms. {I}, {II}.
\newblock {\em Trans. Amer. Math. Soc.}, 196:1--36; ibid. 196\ (1974), 37--50,
  1974.

\bibitem{CMP15}
R~Casals, E.~Murphy, and F.~Presas.
\newblock Geometric criteria for overtwistedness.
\newblock arXiv:1503.06221.

\bibitem{Evans-Rizell}
G.~Dimitroglou~Rizell and J.~D. Evans.
\newblock Exotic spheres and the topology of symplectomorphism groups.
\newblock {\em J. Topology}, 8(2):586--602, 2015.

\bibitem{EM15}
Y.~Eliashberg and E~Murphy.
\newblock Making cobordisms symplectic.
\newblock arXiv:1504.06312.

\bibitem{EM13}
Y.~Eliashberg and E.~Murphy.
\newblock Lagrangian caps.
\newblock {\em Geom. Funct. Anal.}, 23(5):1483--1514, 2013.

\bibitem{FSS}
K.~Fukaya, P.~Seidel, and I.~Smith.
\newblock Exact {L}agrangian submanifolds in simply-connected cotangent
  bundles.
\newblock {\em Invent. Math.}, 172(1):1--27, 2008.

\bibitem{Ge16}
Jianquan Ge.
\newblock Isoparametric foliations, diffeomorphism groups and exotic smooth
  structures.
\newblock {\em Adv. Math.}, 302:851--868, 2016.

\bibitem{Gromoll}
D.~Gromoll.
\newblock Differenzierbare {S}trukturen und {M}etriken positiver {K}r\"ummung
  auf {S}ph\"aren.
\newblock {\em Math. Ann.}, 164:353--371, 1966.

\bibitem{Gr85}
M.~Gromov.
\newblock Pseudo holomorphic curves in symplectic manifolds.
\newblock {\em Invent. Math.}, 82(2):307--347, 1985.

\bibitem{Harris}
B.~Harris.
\newblock Some calculations of homotopy groups of symmetric spaces.
\newblock {\em Trans. Amer. Math. Soc.}, 106:174--184, 1963.

\bibitem{HHR}
M.~Hill, M.~Hopkins, and D.~Ravenel.
\newblock On the non-existence of elements of {K}ervaire invariant one.
\newblock arXiv:0908.3724.

\bibitem{Hubbuck}
J.~R. Hubbuck.
\newblock On homotopy commutative {$H$}-spaces.
\newblock {\em Topology}, 8:119--126, 1969.

\bibitem{Kervaire}
M.~A. Kervaire.
\newblock Some nonstable homotopy groups of {L}ie groups.
\newblock {\em Illinois J. Math.}, 4:161--169, 1960.

\bibitem{Lashof}
R.~Lashof~(ed.).
\newblock Problems in differential and algebraic topology. {S}eattle
  {C}onference, 1963.
\newblock {\em Ann. of Math. (2)}, 81:565--591, 1965.

\bibitem{Lawson}
T.~C. Lawson.
\newblock Remarks on the pairings of {B}redon, {M}ilnor, and
  {M}ilnor-{M}unkres-{N}ovikov.
\newblock {\em Indiana Univ. Math. J.}, 22:833--843, 1972/73.

\bibitem{Levine}
J.~Levine.
\newblock Inertia groups of manifolds and diffeomorphisms of spheres.
\newblock {\em Amer. J. Math.}, 92:243--258, 1970.

\bibitem{McDuff}
D.~McDuff.
\newblock Symplectic manifolds with contact type boundaries.
\newblock {\em Invent. Math.}, 103(3):651--671, 1991.

\bibitem{McLean}
M.~McLean.
\newblock Symplectic homology of {L}efschetz fibrations and {F}loer homology of
  the monodromy map.
\newblock {\em Selecta Math. (N.S.)}, 18(3):473--512, 2012.

\bibitem{PR}
Thomas P\"uttmann and A.~Rigas.
\newblock Presentations of the first homotopy groups of the unitary groups.
\newblock {\em Comment. Math. Helv.}, 78(3):648--662, 2003.

\bibitem{RW15}
O.~Randal-Williams.
\newblock On diffeomorphisms acting on almost complex structures.
\newblock Unpublished note, available at
  https://www.dpmms.cam.ac.uk/~or257/publications.htm.

\bibitem{Schultz}
R.~Schultz.
\newblock On the inertia group of a product of spheres.
\newblock {\em Trans. Amer. Math. Soc.}, 156:137--153, 1971.

\bibitem{Seidel:more_vm}
Paul Seidel.
\newblock More about vanishing cycles and mutation.
\newblock In {\em Symplectic geometry and mirror symmetry ({S}eoul, 2000)},
  pages 429--465. World Sci. Publ., River Edge, NJ, 2001.

\bibitem{Taubes98}
Clifford~Henry Taubes.
\newblock The structure of pseudo-holomorphic subvarieties for a degenerate
  almost complex structure and symplectic form on {$S^1\times B^3$}.
\newblock {\em Geom. Topol.}, 2:221--332, 1998.

\bibitem{Uljarevic}
I.~Uljarevic.
\newblock Floer homology of automorphisms of liouville domains.
\newblock Preprint, arXiv:1404.2183v3.

\end{thebibliography}
\bibliographystyle{plain}
\end{document}